\documentclass[12pt]{amsart}
\usepackage[utf8]{inputenc}

\usepackage{amsmath,amsthm,amssymb,mathscinet,bbm}
\usepackage{parskip}
\usepackage{geometry}
\usepackage{xcolor} 

\usepackage{mathtools} 
\usepackage[hidelinks]{hyperref}
\geometry{
  paper = letterpaper,
  margin = 0.93in,
  includehead,
  footskip = 1cm
}

\author{Paul Pollack} 
\author{Akash Singha Roy}
\address{Department of Mathematics \\ University of Georgia \\ Athens, GA 30602}
\email{pollack@uga.edu}
\email{akash01s.roy@gmail.com}

\subjclass[2020]{Primary 11A25; Secondary 11N36, 11N64}

\renewcommand\phi\varphi
\usepackage{accents}

\renewcommand{\pod}[1]{\allowbreak\mathchoice
  {\if@display \mkern 18mu\else \mkern 8mu\fi (#1)}
  {\if@display \mkern 18mu\else \mkern 8mu\fi (#1)}
  {\mkern4mu(#1)}
  {\mkern4mu(#1)}
}
\usepackage{graphicx}
\DeclareMathAlphabet{\curly}{U}{rsfs}{m}{n}
\newcommand{\con}{{\textrm{con}}}
\newcommand{\inc}{{ \textrm{inc}}}
\newcommand{\1}{\mathbbm{1}}

\newcommand{\F}{\mathbb{F}}
\newcommand\Z{\mathbb{Z}}

\newcommand\Vm{\V_{q, a, m}}
\newcommand\VmSt{\V_{q, a, m}^*}
\newcommand\Vw{\V_q(w)}
\newcommand\Vwst{\V_q^*(w)}

\renewcommand\v{\mathbf{v}}
\newcommand\V{\mathcal{V}}
\newcommand\Q{\mathbb{Q}}

\newtheorem{thm}{Theorem}[section]

\newtheorem{prop}[thm]{Proposition}
\newtheorem{lem}[thm]{Lemma}
\newtheorem*{hypa}{Hypothesis A}
\newtheorem*{hypb}{Hypothesis B}

\theoremstyle{remark}
\newtheorem*{rmk}{Remark}

\numberwithin{equation}{section}
\begin{document}
\title[Polynomially-defined multiplicative functions]{Distribution in coprime residue classes of polynomially-defined multiplicative functions}

\begin{abstract} An integer-valued multiplicative function $f$ is said to be \textsf{polynomially-defined} if there is a nonconstant separable polynomial $F(T)\in \Z[T]$ with $f(p)=F(p)$ for all primes $p$. We study the distribution in coprime residue classes of polynomially-defined multiplicative functions, establishing equidistribution results allowing a wide range of uniformity in the modulus $q$. 
For example, we show that the values $\phi(n)$, sampled over integers $n \le x$ with $\phi(n)$ coprime to $q$, are asymptotically equidistributed among the coprime classes modulo $q$, uniformly for moduli $q$ coprime to $6$ that are bounded by a fixed power of $\log{x}$. 
\end{abstract}

\keywords{uniform distribution, equidistribution, weak uniform distribution, weak equidistribution, multiplicative function}
\maketitle
 
\section{Introduction}
Let $f$ be an integer-valued arithmetic function. We say $f$ is \textsf{uniformly distributed} (or \textsf{equidistributed}) modulo $q$ if, for each residue class $a\bmod{q}$, 
\[ \#\{n \le x: f(n)\equiv a\pmod{q}\} \sim \frac{x}{q}, \quad\text{as $x\to\infty$}. \]
As a nontrivial example, let  $\Omega(n):=\sum_{p^k\parallel n} k$ be (as usual) the function counting the prime factors of $n$ with multiplicity. Then $\Omega(n)$ is uniformly distributed mod $q$ for every positive integer $q$. This result was first established by Pillai in 1940 \cite{pillai40} but today seems best viewed as a special case of a 1969 theorem of Delange \cite{delange69} characterizing when additive functions are uniformly distributed: An integer-valued additive function $f$ is equidistributed mod $q$, for $q$ odd, if and only if 
\begin{equation}\label{eq:delange1} \sum_{p:~d\nmid f(p)} p^{-1} \quad\text{ diverges}\end{equation}
for every divisor $d>1$ of $q$. If $q$ is even, $f$ is uniformly distributed mod $q$ if and only if (a) \eqref{eq:delange1} holds for every divisor $d>2$ of $q$ \emph{and} (b) either \eqref{eq:delange1} holds when $d=2$, or $f(2^r)$ is odd for every positive integer $r$.

For multiplicative functions, there are indications that uniform distribution is not the correct lens to look through. As a case study, consider Euler's $\phi$-function. It is classical (e.g., implicit in work of Landau \cite{landau09}) that for every $q$, almost all positive integers $n$ are divisible by a prime $p\equiv 1\pmod{q}$. (Here and below, \textsf{almost all} means all numbers $n \le x$ with $o(x)$ exceptions, as $x\to\infty$.) But then $q \mid p-1 \mid \phi(n)$. Thus, $100\%$ of numbers $n$ have $\phi(n)$ belonging to the residue class $0\bmod{q}$, so that equidistribution mod $q$ fails for every $q>1$.  

Motivated by these observations, Narkiewicz in \cite{narkiewicz66} introduces the notion of \textsf{weak uniform distribution}. He calls an integer-valued arithmetic function $f$  \textsf{weakly uniformly distributed} (or \textsf{weakly equidistributed}) modulo $q$ if $\gcd(f(n),q)=1$ for infinitely many $n$ and, for every \emph{coprime} residue class $a\bmod{q}$,
\begin{equation}\label{eq:WUDrelation} \#\{n\le x: f(n)\equiv a\pmod{q}\} \sim \frac{1}{\phi(q)} {\#\{n\le x: \gcd(f(n),q)=1\}} , \quad\text{as $x\to\infty$}. \end{equation}
While $\phi(n)$ is not uniformly distributed modulo any $q>1$, Narkiewicz shows in this same paper that $\phi(n)$ is weakly uniformly distributed modulo $q$ precisely when $\gcd(q,6)=1$. His proof goes by estimating the partial sums of $\chi(\phi(n))$, for Dirichlet characters $\chi$ mod $q$, and depends on the theory of mean values of multiplicative functions built up by Delange and Wirsing.

Various criteria are available to decide weak equidistribution, but it remains a highly nontrivial task to completely determine, for a given $f$, the set of $q$ for which $f$ is weakly equidistributed modulo $q$; see Chapter VI of Narkiewicz's monograph \cite{narkiewicz84} for an algorithmic solution to this problem in certain cases. Of special importance for us is the following partial classification, which is a special case of the main theorem of \cite{narkiewicz82}.

Call an integer-valued multiplicative function $f$ \textsf{polynomially-defined} if for some nonconstant polynomial $F(T) \in \Z[T]$, without multiple roots, we have $f(p)=F(p)$ for all primes $p$. When we refer to $F$ in our results below, we mean the (unique) $F$ associated to $f$ in this way.

\begin{prop} Let $f$ be a polynomially-defined  multiplicative function. There is a constant $C=C(F)$ such that, if $q$ is any positive integer all of whose prime factors exceed $C$, then $f$ is weakly equidistributed modulo $q$.
\end{prop}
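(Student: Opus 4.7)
The plan is to reduce weak equidistribution to a character-sum condition on values of $F$ at primes, and then verify that condition prime-power by prime-power using Weil's bound and Hensel lifting.

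By the standard reduction (orthogonality of characters combined with the Delange--Wirsing--Narkiewicz mean-value theory for bounded multiplicative functions), the weak equidistribution of $f$ modulo $q$ is equivalent to (i) $\#\{n \le x : \gcd(f(n), q) = 1\} \to \infty$, together with (ii) for every non-principal Dirichlet character $\chi$ modulo $q$, the divergence of
\[
\sum_{\substack{p\\\gcd(F(p),\,q)=1}} \frac{1 - \chi(F(p))}{p}.
\]
For $q$ fixed as $x\to\infty$, the Prime Number Theorem in arithmetic progressions shows that this partial sum up to $x$ equals $\frac{\log\log x}{\phi(q)}\sum_{a\in S_q}(1-\chi(F(a))) + O_q(1)$, where $S_q := \{a \in (\Z/q\Z)^* : F(a) \in (\Z/q\Z)^*\}$. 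Thus (ii) holds for every non-principal $\chi$ precisely when $\chi$ is non-trivial on $F(S_q)$ for each such $\chi$, equivalently $\langle F(S_q)\rangle = (\Z/q\Z)^*$. By the Chinese Remainder Theorem, $\langle F(S_q)\rangle$ factors through the prime-power isomorphism as $\prod_{\ell^k \parallel q} \langle F(S_{\ell^k})\rangle$, so it suffices to prove $\langle F(S_{\ell^k})\rangle = (\Z/\ell^k\Z)^*$ for every prime power $\ell^k \parallel q$ with $\ell > C(F)$.

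For $k = 1$, I will invoke Weil's bound for character sums. Suppose toward a contradiction that $F(S_\ell) \subseteq \ker\eta$ for some non-trivial multiplicative character $\eta$ of $\F_\ell^*$; then $\sum_{a \in \F_\ell} \eta(F(a)) \ge |S_\ell| - O_F(1) \ge \ell - O_F(1)$. On the other hand, provided $\ell$ does not divide $\mathrm{disc}(F)$ or the leading coefficient of $F$, $F \bmod \ell$ is separable and in particular not a non-trivial perfect power in $\F_\ell[T]$, so Weil's bound gives $\bigl|\sum_a \eta(F(a))\bigr| \le (\deg F - 1)\sqrt{\ell}$. These estimates are incompatible for $\ell > C(F)$ large enough, so $\langle F(S_\ell)\rangle = \F_\ell^*$. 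For $k \ge 2$, I would lift via Hensel's lemma: separability of $F \bmod \ell$ forces $\{a \in \F_\ell : F(a)F'(a) \equiv 0 \pmod \ell\}$ to have size $O_F(1)$, so for $\ell > C(F)$ there exists $a_0 \in \F_\ell^*$ with $F(a_0), F'(a_0) \not\equiv 0 \pmod \ell$. Hensel's lemma then yields that $F$ bijects $a_0 + \ell(\Z/\ell^k\Z)$ onto $F(a_0) + \ell(\Z/\ell^k\Z)$, whence $F(S_{\ell^k})$ contains the full coset $F(a_0)(1 + \ell(\Z/\ell^k\Z))$. Consequently $\langle F(S_{\ell^k})\rangle$ contains the principal-unit subgroup $1 + \ell(\Z/\ell^k\Z)$, and its image in $\F_\ell^*$ is all of $\F_\ell^*$ by the $k = 1$ case; comparing orders forces $\langle F(S_{\ell^k})\rangle = (\Z/\ell^k\Z)^*$. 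The side condition (i) is easy: $|S_\ell| > 0$ for $\ell > C(F)$, so by Dirichlet's theorem there are infinitely many primes $p$ with $\gcd(F(p), q) = 1$, and taking $n = p$ yields infinitely many $n$ with $\gcd(f(n), q) = 1$.

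The main obstacle I anticipate is calibrating $C(F)$ in the Weil-bound step so that the argument uniformly handles all the small exceptional primes: those dividing $\mathrm{disc}(F)$ or the leading coefficient, those small enough that $F \bmod \ell$ might be a non-trivial perfect power, and those for which the bound $(\deg F - 1)\sqrt{\ell}$ fails to beat $|S_\ell| - O_F(1)$. Each exclusion set is finite and computable in terms of $F$, so a valid $C(F)$ exists, but its existence requires carefully tracking the implied constants in the Weil estimate.
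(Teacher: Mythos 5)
The paper does not supply its own proof of this proposition; it is quoted as a special case of the main theorem of Narkiewicz \cite{narkiewicz82}. So there is no in-paper argument to compare against. Your route --- reduce weak equidistribution to a character-divergence criterion via the Delange--Wirsing--Narkiewicz mean-value theory, and verify that criterion prime-power by prime-power using Weil's bound and Hensel lifting --- is the classical one, and is quite different in spirit from the ``ergodic''/sieve method the paper develops for the uniform Theorems~\ref{thm:linear}--\ref{thm:severallarge} (which would also yield the fixed-modulus statement, but by a genuinely different mechanism). The side computations --- the lower bound on $|S_\ell|$, the Weil estimate for $\sum_a \eta(F(a))$ via separability of $F\bmod\ell$, the Hensel lift producing a full coset of $1+\ell(\Z/\ell^k\Z)$ inside $F(S_{\ell^k})$ --- all check out.

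The one step that does not hold as written is the CRT reduction. From $\langle F(S_{\ell^k})\rangle = (\Z/\ell^k\Z)^*$ for every $\ell^k\parallel q$ it does \emph{not} follow that $\langle F(S_q)\rangle = (\Z/q\Z)^*$: if $A_i\subseteq G_i$ are subsets generating $G_i$, the subgroup of $\prod_i G_i$ generated by $\prod_i A_i$ can be proper (take each $A_i$ to be a singleton generator; the product then generates only a cyclic subgroup). Concretely, a character $\chi=\prod_{\ell^k\parallel q}\chi_{\ell^k}$ is trivial on $F(S_q)=\prod F(S_{\ell^k})$ as soon as each $\chi_{\ell^k}$ is \emph{constant} on $F(S_{\ell^k})$ with the constants multiplying to $1$, and being constant is a priori weaker than being identically $1$. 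Fortunately your Weil computation actually proves the stronger fact you need: if a nontrivial $\eta\bmod\ell$ were constant with value $c$ ($|c|=1$) on $F(S_\ell)$, then $\bigl|\sum_a\eta(F(a))\bigr|\ge |S_\ell|-O_F(1)\ge\ell-O_F(1)$, contradicting the bound $(D-1)\sqrt{\ell}$; thus no nontrivial character mod $\ell$ is constant on $F(S_\ell)$, and the Hensel step lifts this to $\ell^k$. Rephrasing the local goal as ``no nontrivial $\chi_{\ell^k}$ is constant on $F(S_{\ell^k})$'' makes the CRT reduction legitimate. Two further (minor) points: weak equidistribution is \emph{implied by} your (i) and (ii), not equivalent to them --- if a character is trivial on $F(S_q)$, the higher prime-power values $f(p^j)$, $j\ge 2$, which your criterion ignores, could still supply the missing generators; and the ``divergence'' in (ii) should be read through the real part, which is the quantity the Hal\'asz--Wirsing theory controls and which governs the comparison with $\#\{n\le x:\gcd(f(n),q)=1\}$ via Proposition~\ref{prop:scourfield}.
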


In all of the work mentioned so far, the modulus $q$ was assumed to be fixed. It is of some interest to seek uniform versions of these results. Here uniform means that $q$ should be allowed allowed to vary with $x$ (the stopping point of our sample), in analogy with the Siegel--Walfisz theorem from prime number theory. Our first theorem shows that one has uniformity in $q$ up to an arbitrary (but fixed) power of $\log{x}$ when $F$ is linear.

\begin{thm}\label{thm:linear} Let $f$ be a fixed polynomially-defined function with $F(T)=RT+S$, where $R,S\in \Z$ with $R\ne 0$.
Fix a real number  $K>0$. The values $f(n)$, for $n\le x$, are asymptotically weakly uniformly distributed modulo $q$ for all moduli $q\le (\log{x})^{K}$ coprime to $6R$.\footnote{That is, the limit relation \eqref{eq:WUDrelation} holds uniformly in $q$, for these $q$.}
\end{thm}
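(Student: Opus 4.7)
By orthogonality of the Dirichlet characters modulo $q$, for every $(a,q)=1$,
\[
\#\{n\le x:f(n)\equiv a\pmod q\} \;=\; \frac{1}{\phi(q)}\sum_{\chi\bmod q}\bar\chi(a)\sum_{n\le x}\chi(f(n)).
\]
The principal character $\chi_0$ contributes the desired main term $\phi(q)^{-1}\#\{n\le x:(f(n),q)=1\}$, so the conclusion \eqref{eq:WUDrelation} reduces to showing that the non-principal character sums $\sum_{n\le x}\chi(f(n))$ are collectively of size $o(\#\{n\le x:(f(n),q)=1\})$, uniformly in $q\le(\log x)^K$ with $(q,6R)=1$ and in $a\in(\Z/q)^\times$.

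\textbf{Structure of the character sums.} Each $n\mapsto\chi(f(n))$ is multiplicative of modulus at most $1$, and at primes $p\nmid q$ we have $\chi(f(p))=\chi(Rp+S)$, a function of $p\bmod q$ since $(R,q)=1$. Siegel--Walfisz, valid for $q\le(\log x)^K$, therefore gives
\[
\sum_{p\le y}\frac{\chi(Rp+S)}{p} \;=\; \frac{T(\chi)}{\phi(q)}\log\log y \;+\; O_K(1),\quad T(\chi):=\sum_{\substack{b\in(\Z/q)^\times\\(Rb+S,q)=1}}\chi(Rb+S).
\]
M\"obius inversion on squarefree divisors of $q$ (using character orthogonality on the kernels of $(\Z/q)^\times\to(\Z/d)^\times$) evaluates $T(\chi_0)=\rho(q):=\#\{b\in(\Z/q)^\times:(Rb+S,q)=1\}$, and, for any non-principal $\chi$ of conductor $d_\chi$,
\[
\frac{|T(\chi)|}{\rho(q)} \;=\; \prod_{p\mid d_\chi}\frac{1}{p-2}\;\le\;\tfrac{1}{3},
\]
where the last inequality uses $(q,6)=1$ to force every prime divisor of $d_\chi$ to be $\ge 5$. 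Feeding these prime-sum averages into a $q$-uniform Landau--Selberg--Delange theorem then yields an asymptotic of the form $\sum_{n\le x}\chi(f(n))\sim c_\chi\,x\,(\log x)^{T(\chi)/\phi(q)-1}$, to be compared with the principal asymptotic of order $x(\log x)^{\rho(q)/\phi(q)-1}$; the ratio exhibits a saving of $(\log x)^{(\operatorname{Re}T(\chi)-\rho(q))/\phi(q)}\ll(\log x)^{-\frac{2}{3}\rho(q)/\phi(q)}$.

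\textbf{Main obstacle: uniformity in $q$.} The hard part is recovering this cancellation against the loss of a factor $\phi(q)-1\le(\log x)^K$ incurred by summing trivially over the non-principal characters: the $\tfrac{2}{3}\,\rho(q)/\phi(q)$ exponent saving by itself is inadequate once $K$ exceeds $2/3$, because $\rho(q)/\phi(q)\le 1$ while $\log\phi(q)\asymp K\log\log x$. Two refinements are needed. First, I would partition the non-principal characters by their conductor $d_\chi$, noting that the number of primitive characters mod $d$ is $\prod_{p\mid d}(p-2)$, which precisely cancels the $\prod_{p\mid d_\chi}(p-2)^{-1}$ factor from $|T(\chi)|/\rho(q)$ after grouping. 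Second, I would exploit that the LSD main-term constant $c_\chi$ carries an additional factor $1/\Gamma(T(\chi)/\phi(q))\asymp|T(\chi)|/\phi(q)$ whenever the exponent is small, giving further smallness for characters of composite conductor. Combined, these refinements reduce the sum over non-principal characters below an arbitrary fixed negative power of $\log x$, yielding uniformity for any $K>0$. Throughout, the Siegel--Walfisz-strength error of $O(x\exp(-c\sqrt{\log x}))$ in the uniform LSD step is negligible compared with both the principal and non-principal main terms across the full range of $q$.
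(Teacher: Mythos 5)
Your proposal is an instance of the character-sum / mean-value-theorem approach, while the paper proves Theorem \ref{thm:linear} by a genuinely different method. The paper decomposes a typical $n$ as $mP_J\cdots P_1$ with $J=\lfloor\log_3 x\rfloor$, applies Siegel--Walfisz $J$ times to equidistribute $(P_1,\dots,P_J)$ in residue tuples $(v_1,\dots,v_J)\bmod q$, and then reduces the statement to a purely algebraic "mixing" lemma (Hypothesis~A) asserting that $\prod_{j}(Rv_j+S)$ is approximately equidistributed in $(\Z/q\Z)^\times$ as the $v_j$ range over admissible classes. That mixing is proved by inclusion--exclusion (or, in the Remark, by a single nontrivial character computation mod $\ell^e$), with the decisive estimate $\sum_{\ell\mid q}\ell(\ell-2)^{-J}\ll 3^{-J/2}=o(1)$; the geometric decay in $J$ beats arbitrary accumulation over the primes dividing $q$. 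The paper explicitly flags the approach you propose as the one that does \emph{not} deliver uniformity for general $q$: the authors' earlier work required $q$ prime or nearly prime precisely because of the obstacle below.

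The gap in your argument is the final step, where you must sum the Landau--Selberg--Delange estimates over the $\phi(q)-1$ non-principal characters. Even granting a fully $q$-uniform LSD theorem (itself nontrivial: factoring $\sum_n\chi(f(n))n^{-s}$ through $\prod_\psi L(s,\psi)^{\lambda_{\chi,\psi}}$ exposes one to Siegel zeros and to the spread of $L(1,\psi)$ values, so the size of the arithmetic factor in $c_\chi$ is not simply $O(1)$), your two refinements do not close the sum in the full range $q\le(\log x)^K$. Group the characters with $T(\chi)\ne0$ by the set $S$ of primes dividing their conductor; there are $\prod_{\ell\in S}(\ell-2)$ of them, each with $|T(\chi)|/\phi(q)=\alpha\prod_{\ell\in S}(\ell-2)^{-1}$. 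After the $1/\Gamma$ normalization, the $S$-block contributes about $\alpha\,x(\log x)^{\alpha\prod_{\ell\in S}(\ell-2)^{-1}-1}$. Summing over the $2^{\omega(q)}-1$ nonempty $S$, and noting that for almost all $S$ the exponent $\alpha\prod_{\ell\in S}(\ell-2)^{-1}$ is essentially $0$, one obtains an error of order $\alpha\,2^{\omega(q)}x/\log x$, to be compared with the main term $\asymp x(\log x)^{\alpha-1}$. The ratio is $\asymp\alpha\,2^{\omega(q)}(\log x)^{-\alpha}$. Now take $q$ to be the product of all primes in $[5,y_0]$ with $y_0\approx K\log_2 x$, so that $\alpha\asymp 1/\log_3 x$ and $\omega(q)\asymp K\log_2 x/\log_3 x$. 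Then $\log\bigl(2^{\omega(q)}\bigr)\asymp (K\log 2)\log_2 x/\log_3 x$ while $\log\bigl((\log x)^{\alpha}\bigr)\asymp \log_2 x/\log_3 x$, so the ratio tends to infinity once $K$ exceeds an absolute constant on the order of $e^{-\gamma}/\log 2$. Your approach therefore gives Theorem \ref{thm:linear} only for $K$ below roughly $0.8$, not for arbitrary fixed $K$. This is exactly the wall that forced the "nearly prime" hypothesis in the predecessors of this paper, and it is the wall the $J$-fold Siegel--Walfisz decomposition is designed to avoid: there one needs only $J\approx\log_3 x$ applications of Siegel--Walfisz and a mixing estimate that converges geometrically in $J$, rather than cancellation across $\phi(q)$ characters.

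A smaller point: your identity $|T(\chi)|/\rho(q)=\prod_{\ell\mid d_\chi}(\ell-2)^{-1}$ holds only when $T(\chi)\ne0$, which forces $d_\chi$ squarefree, $\ell\nmid S$ for every $\ell\mid d_\chi$, and each local component to have conductor exactly $\ell$; as stated for "any non-principal $\chi$" it is false (e.g., $T(\chi)=0$ whenever some $\chi_\ell$ has conductor $\ell^2$).
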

Thus $\phi(n)$, sampled at numbers $n\le x$, is asymptotically weakly equidistributed mod $q$ uniformly for $q\le (\log{x})^{K}$ with $\gcd(q,6)=1$. We are not sure what to conjecture for how far the range of uniformity can be extended. As discussed in \cite{LPR21},  standard conjectures imply that for $f(n)=\phi(n)$, we cannot replace $(\log{x})^{K}$ with $L(x)^{1+\delta}$ for any $\delta>0$, where $L(x) = x^{ \log\log\log{x}/\log\log{x}}$.

When the defining polynomial $F$ has degree larger than $1$, our method applies but the results require some preparation to state. Let $F(T) \in \Z[T]$ be nonconstant. For each positive integer $q$, define
\begin{equation}\label{eq:nuqdef} \nu(q) = \#\{a\bmod{q}: \gcd(a,q)=1\text{ and } F(a)\equiv 0\pmod{q}\}\end{equation}
and let
\begin{equation}\label{eq:alphaqdef} \alpha(q) = \frac{1}{\phi(q)}\#\{a \bmod{q}: \gcd(aF(a),q)=1\}. \end{equation}
It is straightforward to check, using the Chinese Remainder Theorem, that
\[ \alpha(q) = \prod_{\substack{\ell \mid q \\ \ell \text{ prime}}} \left(1-\frac{\nu(\ell)}{\ell-1}\right). \] 
If $F$ has degree $D$, then $\nu(\ell) \le D$ whenever $\ell$ does not divide the leading coefficient of $F$. Thus, if $q$ is coprime to that coefficient and every prime dividing $q$ exceeds $D+1$, then $\alpha(q)$ is nonzero. Furthermore, by a standard argument with Mertens' theorem, as long as $\alpha(q)$ is nonzero,
\begin{equation}\label{eq:alphaqlower}\alpha(q) \gg_F (\log\log{(3q)})^{-D}. \end{equation}
The lower bound \eqref{eq:alphaqlower} will prove important later.

In what follows, by $\omega(q)$ we shall mean the number of distinct primes dividing $q$. 

\begin{thm}\label{thm:generalF} Let $f$ be a fixed, polynomially-defined multiplicative function. Fix $\delta \in (0, 1]$. There is a constant $C=C(F)$ such that the following holds. For each fixed $K$, the values $f(n)$ for $n\le x$ are asymptotically weakly uniformly distributed mod $q$ provided that $q\le (\log{x})^{K}$, that $q$ is divisible only by primes exceeding $C$, and that either
\begin{enumerate}
    \item[(i)] $q$ is squarefree with 
    $\omega(q) \le (1-\delta)\alpha(q) \log\log{x}/\log{D}$,~\emph{or} 
    \item[(ii)] $q\le (\log{x})^{\alpha(q)(1-\delta)(1-1/D)^{-1}}$.
\end{enumerate}
\end{thm}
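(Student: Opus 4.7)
The plan is to reduce weak equidistribution modulo $q$ to uniform bounds on the twisted sums
\[ S_\chi(x) := \sum_{\substack{n \le x\\ \gcd(f(n), q) = 1}} \chi(f(n)), \qquad \chi \bmod q. \]
By orthogonality of Dirichlet characters, the theorem is equivalent to the estimate $S_\chi(x) = o(M(x))$ as $x \to \infty$, uniformly in non-principal $\chi$ and in $q$ satisfying the hypotheses, where $M(x) := S_{\chi_0}(x) = \#\{n \le x: \gcd(f(n), q) = 1\}$. The denominator is handled by applying the Landau--Selberg--Delange method to the multiplicative indicator $n \mapsto \mathbf{1}_{\gcd(f(n),q) = 1}$: Siegel--Walfisz in the range $q \le (\log x)^K$ yields $\sum_{p \le x,\ \gcd(F(p),q)=1} p^{-1} = \alpha(q)\log\log x + O_F(\log\log q)$, and in conjunction with \eqref{eq:alphaqlower} to tame the dependence on $q$ one obtains $M(x) \asymp_F x(\log x)^{\alpha(q) - 1}$.

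For non-principal $\chi$ I would apply the Hal\'{a}sz--Montgomery--Tenenbaum theorem to the unimodular multiplicative function $g_\chi(n) := \chi(f(n))\mathbf{1}_{\gcd(f(n), q) = 1}$, getting
\[ \frac{|S_\chi(x)|}{x} \ll (1 + M_\chi) e^{-M_\chi} + (\log x)^{-1/2}, \]
where $M_\chi := \min_{|t| \le (\log x)^{O_K(1)}} \sum_{p \le x}\bigl(1 - \mathrm{Re}(g_\chi(p) p^{-it})\bigr)/p$ is the pretentiousness distance. To beat the main term it is enough to show $M_\chi \ge (1 - \alpha(q))\log\log x + A_x$ with $A_x \to \infty$ uniformly in $\chi$ and in $q$ in the allowed range.

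The pretentiousness lower bound is the heart of the argument. Primes $p$ with $\gcd(F(p), q) > 1$ contribute $(1 - \alpha(q))\log\log x + O(\log\log q)$ to $M_\chi$ automatically. For the on-support primes, decomposing into residue classes $a \bmod q$ with $\gcd(aF(a), q) = 1$ and applying Siegel--Walfisz yields
\[ \sum_{\substack{p \le x\\ \gcd(F(p), q) = 1}} \frac{\chi(F(p)) p^{-it}}{p} = \frac{S_{\chi, q}}{\phi(q)}\sum_{\substack{p\le x\\ (p, q) = 1}} p^{-1-it} + O(1), \]
where $S_{\chi, q} := \sum_{a \bmod q,\ \gcd(aF(a), q) = 1}\chi(F(a))$; since the trailing prime sum has modulus at most $\log\log x + O(1)$, this gives $M_\chi \ge \log\log x\cdot(1 - |S_{\chi,q}|/\phi(q)) + O(\log\log q)$. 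To estimate $|S_{\chi, q}|$, in case (i) I would use the Chinese Remainder Theorem to factor $S_{\chi, q} = \prod_{\ell \mid q} S_{\chi_\ell, \ell}$ and then apply Weil's theorem (valid because $F$ is separable) to obtain $|S_{\chi_\ell, \ell}| \le (D - 1)\sqrt{\ell} + O_D(1)$ at each prime where $\chi_\ell$ is non-principal. In case (ii) I would instead invoke a Birch--Bombieri / Cochrane--Zheng type uniform bound $|S_{\chi, q}| \ll_F q^{1 - 1/D}$ for polynomial character sums modulo arbitrary $q$; this is exactly where the exponent $(1 - 1/D)^{-1}$ appearing in the hypothesis on $q$ originates. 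In either case, taking $C = C(F)$ large enough forces $|S_{\chi, q}|/\phi(q) \le (1 - c_0)\alpha(q)$ for a fixed $c_0 = c_0(C, D) > 0$, whence $|S_\chi(x)|/M(x) \ll (\log x)^{-c_0 \alpha(q)}$. Conditions (i) and (ii) are calibrated precisely so that $\alpha(q)\log\log x \to \infty$ uniformly over the allowed $q$, which makes this bound $o(1)$.

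The main obstacle is the uniformity of the pretentiousness lower bound over all $t$ in the Hal\'{a}sz range, rather than only at $t = 0$: for large $|t|$ one must rule out potential partial resonance between $\chi \circ f$ and $n^{it}$, which calls for Siegel--Walfisz applied to twisted $L$-functions together with a careful treatment of the $p^{-it}$ factors in the character-sum step. A subsidiary challenge is to ensure that the algebraic character-sum bounds (Weil in case (i), Birch--Bombieri type in case (ii)) plug into the Landau--Selberg--Delange estimate for $M(x)$ with all residual errors absorbed comfortably by the saving $\alpha(q)\log\log x$, uniformly for $q \le (\log x)^K$.
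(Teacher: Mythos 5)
Your proposal takes a genuinely different route from the paper, but as written it has a gap that appears to be fatal, and the gap is precisely the one the authors flag in their concluding remarks.

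First, the orthogonality step is off. Weak equidistribution mod $q$ unwinds (by orthogonality) to
\[ \#\{n\le x: f(n)\equiv a\pmod q\} = \frac{M(x)}{\phi(q)} + \frac{1}{\phi(q)}\sum_{\chi\ne\chi_0}\bar\chi(a)S_\chi(x), \]
so what you actually need is $\sum_{\chi\ne\chi_0}|S_\chi(x)| = o(M(x))$. A uniform bound $|S_\chi(x)|\le\epsilon(x)M(x)$ with $\epsilon(x)\to 0$ does not yield this, since the sum has $\phi(q)-1$ terms; you need $|S_\chi(x)| = o(M(x)/\phi(q))$, an extra factor of $\phi(q)$ that your final bound $|S_\chi(x)|/M(x)\ll(\log x)^{-c_0\alpha(q)}$ does not provide when $\phi(q)$ is larger than $(\log x)^{c_0\alpha}$ --- which it is throughout most of the ranges in (i) and (ii), since $c_0<1<(1-1/D)^{-1}$ and $K$ is arbitrary.

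Second, even setting aside the sum over characters, the Hal\'asz error term is an obstruction here. The sharpest forms of Hal\'asz--Montgomery--Tenenbaum applied to a $1$-bounded $g_\chi$ give an additive error of order $x(\log x)^{-1/2+o(1)}$ (or at best $x/\log x$), whereas $M(x)/\phi(q)\asymp x(\log x)^{\alpha-1}\exp(O((\log\log q)^{O(1)}))/\phi(q)$. Since $\alpha(q)$ can be as small as $\asymp(\log\log q)^{-D}$, and $q$ can exceed $(\log x)^{\alpha}$ under both (i) and (ii), the Hal\'asz error dwarfs the target. Your pretentiousness lower bound $M_\chi\ge(1-(1-c_0)\alpha)\log\log x$ gives a saving of only $(\log x)^{-c_0\alpha}$, which is of the same order as the best-case Hal\'asz saving and again does not absorb $\phi(q)$. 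In short, the pretentious approach saves a fixed power of $\log x$ tied to $\alpha$, and that is not enough when $q$ is allowed to be an \emph{arbitrary} power of $\log x$ (condition (i)) or up to $(\log x)^{\alpha(1-1/D)^{-1}(1-\delta)}$ with $(1-1/D)^{-1}>1$ (condition (ii)). The authors say exactly this in Section 6: via Hal\'asz they could only reach $q\le(\log x)^{1/2-\delta}$ in the analogous problem for $A(n)$.

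The paper's mechanism is quite different and sidesteps both obstructions. It restricts to ``convenient'' $n=mP_J\cdots P_1$ whose $J:=\lfloor\log_3 x\rfloor$ largest prime factors are distinct and exceed $y=\exp((\log x)^{\delta/2})$, applies Siegel--Walfisz $J$ times to remove the congruence conditions on $P_1,\dots,P_J$, and then reduces the problem to comparing $\#\V''_{q,a,m}$ against $\#\V'_q$, the counts of admissible $J$-tuples mod $q$. The orthogonality/character-sum input then appears raised to the $J$th power: the relevant quantity is $\sum_{\chi\ne\chi_0}\bar\chi(w)Z_\chi^J$ with $Z_\chi=\sum_{v\bmod\ell^e}\chi_0(v)\chi(F(v))$ controlled by the Weil/Cochrane bound (Lemma~\ref{lem:combinedcharsum}), so the $\phi(\ell^e)$ characters are beaten by the $J$th-power saving $\ll\ell^{-J/(D+1)}$ since $J\to\infty$. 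That $J$th-power gain is precisely what is missing from a one-shot Hal\'asz argument. The conditions (i) and (ii) then arise not from any character-sum bound on $\sum_{a}\chi(F(a))$ but from Hypothesis~B: the inconvenient $n$ are controlled via the number of roots of $F(v)\equiv a\pmod q$, which is $\le D^{\omega(q)}$ for squarefree $q$ (giving (i)) and $O(q^{1-1/D})$ by Konyagin's bound in general (giving (ii)). You correctly identified the exponents $D^{\omega(q)}$ and $q^{1-1/D}$ as the source of (i) and (ii), but attributed them to the wrong mechanism.

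If you want to rescue a character-sum approach, the thing to internalize is that you must extract cancellation from a product over many primes $P_1,\dots,P_J$ (a random-walk-on-$(\Z/q\Z)^\times$ mixing phenomenon, as the authors describe it), not from a single application of a mean-value theorem.
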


Conditions (i) and (ii) in Theorem \ref{thm:generalF} reflect genuine obstructions to uniformity. To motivate (i), fix an integer $D\ge 2$, and let $F(T) = (T-2)(T-4)\cdots (T-2D)+2$. Note that $F$ is Eisenstein at $2$, so $F$ is irreducible over $\Q$ and thus without multiple roots. Let $f$ be the completely multiplicative function with $f(p) = F(p)$ for all primes $p$, and let $q$ be a squarefree product of primes exceeding $D+1$. Then $F(p) \equiv 2\pmod{q}$ whenever $(p-2)\cdots (p-2D)\equiv 0\pmod{q}$. This congruence puts $p$ in one of $D^{\omega(q)}$ coprime residue classes mod $q$. Hence, we expect $\gg\frac{D^{\omega(q)}}{\phi(q)} \frac{x}{\log{x}}$ primes $p\le x$ with $F(p) \equiv 2\pmod{q}$, and we are assured this many primes (by Siegel--Walfisz) if $q$ is bounded by a power of $\log{x}$. On the other hand, Proposition \ref{prop:scourfield} below implies (under this same restriction on the size of $q$) that the number of $n\le x$ with $\gcd(f(n),q)=1$ is  $x/(\log{x})^{1-(1+o(1))\alpha(q)}$. 
Thus, the residue class $2\bmod{q}$ will be `overrepresented' (vis-\`{a}-vis
the expectation of weak uniform distribution) if $D^{\omega(q)} > (\log{x})^{(1+\delta)\alpha(q)}$ for a fixed $\delta>0$, which can happen already with $q \le (\log x)^{O_D(1)}$. \footnote{One can take $q$ to be the product of the primes from $D+1$ up to $K_D \log \log x$, for a suitably chosen constant $K_D$. Here the prime ideal theorem is useful for estimating $\alpha(q)$.}
%$\log{q}$ of order $\log_{2}{x}/(\log_3{x})^{D-1}$, so well within the range $q\le (\log{x})^{K}$).
It follows that (i) is essentially optimal.  

To motivate (ii), fix $D\ge 2$, and let $f$ be the completely multiplicative function given by $f(p) = (p-1)^D+1$ for all primes $p$. Let $q$ be a $D$th power, say $q=q_1^D$. Then $f(p)\equiv 1\pmod{q}$ whenever $p\equiv 1\pmod{q_1}$. Thus, if $q$ is bounded by a power of $\log{x}$, there will be $\gg x/\phi(q_1)\log{x}$ primes $p\le x$ for which $f(p)\equiv 1\pmod{q}$. On the other hand, if we assume all primes dividing $q_1$ exceed $D+1$, Proposition  \ref{prop:scourfield} implies that there are $x/(\log{x})^{1-(1+o(1))\alpha(q)}$ integers $n\le x$ with $\gcd(f(n),q)=1$. It follows that the residue class $1\bmod{q}$ will be overrepresented if $q^{1-1/D}= q/q_1 > (\log{x})^{(1+\delta)\alpha(q)}$. 
This means that for weak equidistribution we require $q$ to be no more than $\approx (\log{x})^{\alpha(q)(1-1/D)^{-1}}$.
So (ii) is essentially best possible as well. 

In both of the constructions described above, the obstruction to uniformity came from prime inputs $p$. Tweaking the construction slightly, we could easily produce obstructions to uniformity of the form $rp$, with $r$ fixed (or even with $r$ growing slowly with $x$). In our final theorem, we pinpoint the `problem' here as one of having too few large prime factors. Specifically, we show that uniformity up to an arbitrary power of $\log{x}$ can be restored by considering only inputs with sufficiently many prime factors exceeding $q$. In fact, for squarefree moduli $q$, it suffices to restrict to inputs with composite $q$-rough part. 

We write $P(n)$ for the largest prime factor of $n$, with the convention that $P(1)=1$. We set $P_1(n)=P(n)$ and define, inductively, $P_k(n) = P_{k-1}(n/P(n))$. Thus, $P_k(n)$ is the $k$th largest prime factor of $n$, with $P_k(n)=1$ if $\Omega(n) < k$.

\begin{thm}\label{thm:severallarge} Let $f$ be a fixed, polynomially-defined function. There is a constant $C(F)$ such that the following hold. 
\begin{enumerate}
\item[(a)] For each fixed $K>0$, 
\begin{multline}\label{eq:severallargeeq} \#\{n\le x: P_{D+2}(n) > q,~f(n)\equiv a\pmod{q}\} \\
\sim \frac{1}{\phi(q)}{\#\{n\le x: P_{D+2}(n)>q,~\gcd(f(n),q)=1\}} \quad\text{ as $x\to\infty$}, \end{multline}
uniformly for coprime residue classes $a\bmod{q}$ with $q\le (\log{x})^{K}$ and $q$ divisible only by primes exceeding $C(F)$. 
\item[(b)] For each fixed $K>0$,
\begin{multline*} \#\{n\le x: P_{2}(n) > q,~f(n)\equiv a\pmod{q}\} \\
\sim \frac{1}{\phi(q)}{\#\{n\le x: P_{2}(n)>q,~\gcd(f(n),q)=1\}} \quad\text{ as $x\to\infty$}, \end{multline*}
uniformly for coprime residue classes $a\bmod{q}$ with $q$ squarefree, $q\le (\log{x})^{K}$, and $q$ divisible only by primes exceeding $C(F)$. 
\end{enumerate}
\end{thm}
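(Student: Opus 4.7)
The plan is to proceed via the standard Dirichlet-character dichotomy. For each nontrivial character $\chi \pmod q$, it suffices to prove
\[
T_\chi(x) := \sum_{\substack{n\le x\\ P_k(n)>q}} \chi(f(n)) \;=\; o\Bigl(\#\{n\le x : P_k(n)>q,\,\gcd(f(n),q)=1\}\Bigr),
\]
uniformly in the allowed $\chi$ and $q$ (with $k=D+2$ in part (a) and $k=2$, $q$ squarefree, in part (b)). Proposition~\ref{prop:scourfield}, together with a routine Rankin-trick estimate showing that $\#\{n\le x : P_k(n)\le q\} = o(x(\log x)^{\alpha(q)-1})$ in the relevant range of $q$, gives the denominator $\gg_{F,K} x(\log x)^{\alpha(q)-1}$. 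So any saving of a positive power of $\log x$ in $T_\chi(x)$ suffices.

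I would bound $T_\chi(x)$ by peeling off the $k$ largest prime factors of $n$. After absorbing into an $O(x/q)$ error the contribution from $n$ having some prime $>q$ of multiplicity $\ge 2$, each remaining $n$ factors uniquely as $n = p_1\cdots p_k\,m$ with $p_1>\cdots>p_k>q$ distinct primes and $P(m)<p_k$. Multiplicativity of $\chi\circ f$ gives
\[
\chi(f(n)) = \chi(F(p_1))\cdots\chi(F(p_k))\,\chi(f(m)).
\]
Organizing the sum dyadically and invoking Siegel--Walfisz (valid since $q\le(\log x)^K$),
\[
\sum_{q<p\le y}\chi(F(p)) = \frac{S(\chi)}{\phi(q)}\pi(y) + O\bigl(y\exp(-c\sqrt{\log y})\bigr),\qquad S(\chi) := \!\!\sum_{(a,q)=1}\!\!\chi(F(a)),
\]
pulls out $k$ factors of $S(\chi)/\phi(q)$, against $k$ factors of $\alpha(q)$ that arise when the denominator is unfolded in the same way. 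Weak equidistribution thus reduces to showing that $|S(\chi)|/(\alpha(q)\phi(q))\le \theta_0 < 1$ for every nontrivial $\chi$, with the $k$-fold iteration and a standard Hal\'asz- or Shiu-type bound on the residual $m$-sum providing the required saving beyond the denominator.

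The heart of the argument is this Gauss-sum-type bound on $S(\chi)$. By the Chinese remainder theorem, $S(\chi) = \prod_{\ell^v\| q} S_{\ell^v}(\chi_{\ell^v})$, and each nontrivial local component must contribute the saving. In part (b) ($q$ squarefree), each $\ell>C(F)$ exceeds the discriminant of $F$, so $F\bmod\ell$ is separable; Weil's bound for polynomial character sums gives $|S_\ell(\chi_\ell)|\le(D-1)\sqrt\ell$ at nontrivial $\chi_\ell$, whence $|S_\ell(\chi_\ell)|/(\alpha_\ell(q)(\ell-1)) = O_F(\ell^{-1/2}) \le \tfrac12$ for $C(F)$ large; a single nontrivial local component already suffices, and $k=2$ is enough. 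In part (a), higher prime powers $\ell^v$ with $v\ge 2$ may appear, and Weil's bound is no longer directly available; one must rely instead on the algebraic fact that for $\ell>C(F)$ the multiplicative group generated by $F((\Z/\ell^v\Z)^*)$ inside $(\Z/\ell^v\Z)^*$ is the entire group, which yields a uniform constant-factor saving per nontrivial local component. The $k=D+2$ iteration then converts this into the needed power-of-$\log x$ advantage. Quantifying this local saving so that it dominates the $\alpha(q)$-loss from the denominator is the chief technical obstacle.
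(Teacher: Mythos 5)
Your proposal runs into a genuine obstruction, and it is worth pinpointing exactly where. By orthogonality, the deviation from weak equidistribution is $\frac{1}{\phi(q)}\sum_{\chi\ne\chi_0}\bar\chi(a)T_\chi(x)$, so what is really needed is $\sum_{\chi\ne\chi_0}|T_\chi(x)|=o(N_k(q))$, i.e.\ a saving of size $\phi(q)\le (\log x)^K$ in each $T_\chi$. Your plan is to peel off a \emph{fixed} number $k$ of large primes ($k=D+2$ or $k=2$) and win a factor $\bigl(|S(\chi)|/(\alpha(q)\phi(q))\bigr)^k\le\theta_0^k$. But for $k$ fixed and $\theta_0$ a fixed constant (which is the generic situation when $q$ has a small prime factor just above $C(F)$), this is a \emph{constant}-factor gain, not a power of $\log x$. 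The Hal\'asz/Shiu bound on the residual $m$-sum cannot rescue this: the best one can extract from the pretentious distance of $\chi\circ f$ to $1$ is a saving of $(\log x)^{\alpha(q)-\mathrm{Re}\,S(\chi)/\phi(q)}$, and that exponent is uniformly bounded (in fact $\le 2$), so it does not beat $(\log x)^{K}$ for $K$ large. The paper evades this by exploiting an \emph{unbounded} number $J=\lfloor\log_3 x\rfloor$ of large-prime ``slots'' for the bulk of the count (the convenient $n$), which converts the per-slot constant saving $\theta_0$ into $\theta_0^J\to 0$, beating any fixed power of $\log x$; the remaining $n$ (inconvenient but still with $P_k(n)>q$) are then counted directly, via bounds on $\#\mathcal V_q(w)$ together with Brun--Titchmarsh, with no cancellation being asked of them. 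That two-piece decomposition is the crux, and it is absent from your write-up.

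There are also gaps in the local character-sum input. For prime powers $\ell^v$ with $v\ge 2$ (needed for part (a)), ``the multiplicative group generated by $F((\mathbb Z/\ell^v\mathbb Z)^\times)$ is the whole group'' is not a quantitative estimate; what the paper actually uses is Cochrane's bound $|\sum_{x\bmod \ell^{e_0}}\chi_0(x)\chi(F(x))|\le D\ell^{e_0(1-1/(D+1))}$ for primitive $\chi$ of conductor $\ell^{e_0}$, and this exponent $1/(D+1)$ is precisely why $D+2$ slots are needed. For part (b) the paper does not argue through a one-variable Weil bound at all; instead it bounds $\#\mathcal V_q(w)$ by the number of $\mathbb F_\ell$-points on the affine curve $F(x)F(y)=w$, which requires proving that $F(x)F(y)-w$ is absolutely irreducible over $\overline{\mathbb F}_\ell$ and then invoking a singular-curve Hasse--Weil bound (Leep--Yeomans). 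So even setting aside the main gap above, the statement ``$|S(\chi)|/(\alpha(q)\phi(q))\le\theta_0<1$ for every nontrivial $\chi$'' and the squarefree treatment with a bare Weil estimate do not reproduce the paper's technical content.
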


The method of the present paper refines that of the authors' earlier works \cite{LPR21, PSR22}. In those papers, it was crucial that the modulus $q$ be either prime or `nearly prime', in the sense that $\sum_{\ell \mid q} 1/\ell = o(1)$. The essential new ingredient here, which allows us to dispense with any such condition, is the exploitation of a certain  ergodic (or mixing) phenomenon within the multiplicative group mod $q$. As one illustration: Let $q$ be a positive integer coprime to $6$. From the collection of units $u$ mod $q$ for which $u+1$ is also a unit, choose uniformly at random $u_1, u_2, u_3,\dots$, and construct the products $u_1, u_1 u_2, u_1 u_2 u_3, \dots$. Once $J$ is large, each unit mod $q$ is roughly equally likely to appear as $u_1 \cdots u_J$. This particular example plays a starring role in our approach to the weak equidistribution of Euler's $\phi$-function. 

When $f=\phi$, Theorem \ref{thm:linear} is in the spirit of the Siegel--Walfisz theorem, with primes replaced by  values of $\phi(n)$. For investigations of the corresponding `Linnik's theorem', concerning the least $n$ for which $\phi(n)$ falls into a given progression, see \cite{CG09,FL08, FS07,garaev09}.

Finally, it is worth mentioning that although in the spirit of Narkiewicz's results, we stated Theorems \ref{thm:linear}, \ref{thm:generalF} and \ref{thm:severallarge} for $F(T) \in \Z[T]$, our methods go through (with minor modifications) for integer-valued polynomials $F$, namely those satisfying $F(\Z) \subset \Z$. Writing any such polynomial in the form $G(T)/Q$ for some positive integer $Q$ 
and 
$G(T) \in \Z[T]$, we need only ensure in addition that the constant $C(F)$ appearing in the aforementioned theorems exceeds $Q$. 

\subsection*{Notation and conventions} We do not consider the zero function as   multiplicative (thus, if $f$ is multiplicative, then $f(1)=1$). Throughout, the letters $p$ and $\ell$ are to be understood as denoting primes. Implied constants in $\ll$ and $O$-notation may always depend on any parameters  declared as ``fixed''; other dependence will be noted explicitly (for example, with subscripts). We use $\log_{k}$ for the $k$th iterate of the natural logarithm. When there is no danger of confusion, we write $(a,b)$ instead of $\gcd(a,b)$. 

\section{A preparatory estimate: The frequency with which $(f(n),q)=1$}

The following proposition is contained in results of Scourfield \cite{scourfield84}. Nevertheless, we give a complete treatment here, for two reasons. First, we prefer to keep matters as self-contained as possible. Second, the results of \cite{scourfield84} are much more precise than we will need. The weaker version below admits a simpler and shorter proof (although we make no claim to originality regarding the underlying ideas).

For readability, we sometimes abbreviate $\alpha(q)$ to $\alpha$, suppressing the dependence on $q$.

\begin{prop}\label{prop:scourfield} Fix a multiplicative function $f$ with the property that $f(p)=F(p)$ for all primes $p$, where $F(T) \in \Z[T]$ is nonconstant. Fix $K>0$. If $x$ is sufficiently large and $q\le (\log{x})^{K}$ with $\alpha = \alpha(q)> 0$, then
\begin{equation}\label{eq:scourfieldestimate} \#\{n\le x: (f(n),q)=1\} = \frac{x}{(\log{x})^{1-\alpha}} \exp(O((\log\log{(3q)})^{O(1)})). \end{equation}
\end{prop}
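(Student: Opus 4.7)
Write $g(n) := \1[(f(n), q) = 1]$, a nonnegative multiplicative function taking values in $\{0, 1\}$, and let $\Sigma(x) := \sum_{n \le x} g(n)$ denote the count in \eqref{eq:scourfieldestimate}. For $p \nmid q$, $g(p) = 1$ precisely when $\ell \nmid F(p)$ for every prime $\ell \mid q$, a condition on $p \bmod{q}$. By CRT and the definitions \eqref{eq:nuqdef}--\eqref{eq:alphaqdef}, exactly $\alpha \phi(q)$ of the $\phi(q)$ coprime residue classes modulo $q$ satisfy this condition. Since $q \le (\log x)^K$, the Siegel--Walfisz theorem applied to each such class gives, uniformly in $q$,
\[ \sum_{p \le y} \frac{g(p)}{p} = \alpha \log\log y + O((\log\log (3q))^{O(1)}) \quad\text{and}\quad \sum_{p \le y} g(p) \log p = \alpha y + O(y / (\log y)^2), \]
valid for $y$ in a suitable range (say $y \ge (\log x)^{A}$ for large $A$; the contribution of smaller $p$ is trivially absorbed into the error).

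For the upper bound on $\Sigma(x)$, I apply the Halberstam--Richert mean-value inequality for nonnegative multiplicative functions bounded by $1$:
\[ \Sigma(x) \ll \frac{x}{\log x} \exp\Biggl( \sum_{p \le x} \frac{g(p)}{p} \Biggr) \ll \frac{x}{(\log x)^{1 - \alpha}} \exp(O((\log\log (3q))^{O(1)})), \]
the second step by the prime-sum estimate. For the matching lower bound, I invoke the Levin--Fainleib theorem (alternatively, a Wirsing-type estimate), whose main input is the sum of $g(p)\log p$ recorded above. It yields $\Sigma(x) \gg c(q) \cdot x / (\log x)^{1 - \alpha}$ with $c(q)$ an explicit Euler product. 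Using Mertens' theorem, applied separately to the good set $\{p : g(p) = 1\}$ and to the primes dividing $q$, and combined with the lower bound \eqref{eq:alphaqlower} on $\alpha$, one verifies $c(q) \ge \exp(-(\log\log (3q))^{O(1)})$.

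The main technical obstacle is tracking uniformity in $q$ throughout. Because $\alpha$ may be as small as $(\log\log q)^{-D}$, classical constants such as $1/\Gamma(\alpha) \asymp 1/\alpha$, together with Euler-product corrections indexed by the primes dividing $q$, can blow up polylogarithmically in $q$. The slack $\exp(O((\log\log(3q))^{O(1)}))$ in the statement is tailored precisely to absorb these blowups, so the proof proceeds by estimating every error term in those generous terms rather than chasing a sharper asymptotic (which is of course what Scourfield actually does in \cite{scourfield84}).
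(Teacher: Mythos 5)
Your upper-bound plan coincides with the paper's at the top level: both rest on a Halberstam--Richert/Hall--Tenenbaum mean-value inequality, so the whole content is in the estimate $\sum_{p\le x}g(p)/p = \alpha\log_2 x + O\bigl((\log_2(3q))^{O(1)}\bigr)$. Where you differ is in how that sum is obtained, and here there is a real gap. You propose Siegel--Walfisz per class, claiming the contribution of $p$ below a threshold like $(\log x)^A$ is ``trivially absorbed.'' It is not: first, Siegel--Walfisz requires $q\le(\log t)^{O(1)}$ and so does not even kick in until $t$ is of size roughly $\exp\bigl(q^{1/O(1)}\bigr)$, which for $q$ near $(\log x)^K$ is about $\exp\bigl((\log x)^{1/2}\bigr)$, far above $(\log x)^A$; second, crudely discarding the primes below that threshold loses $\sum_{p\le T}1/p\asymp\log\log T$, which is of order $\log q$ (not $(\log\log q)^{O(1)}$) when $T=\exp(q^{1/O(1)})$, and is of order $\log_3 x$ (again not $(\log\log q)^{O(1)}$) when $q$ is bounded and $T=(\log x)^A$. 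The paper avoids both problems by replacing Siegel--Walfisz with the elementary Norton--Pomerance estimate $\sum_{p\le y,\, p\equiv a\bmod d}1/p=\log_2 y/\phi(d)+1/p_{d,a}+O(\log(3d)/\phi(d))$, valid for all $y\ge\max\{3,d\}$; applied after M\"obius inversion over divisors $d\mid q$, it yields the prime-sum estimate uniformly and with no dead zone in the range of $p$. You would need to swap your small-prime treatment for something of this type.

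For the lower bound you take a genuinely different route: you invoke Levin--Fa\u{\i}nle\u{\i}b/Wirsing, whereas the paper constructs $n=mP$ with $m\le x^{1/3}$ squarefree, $(f(m),q)=1$, and $P\in(x^{1/2},x/m]$ a prime in one of the $\alpha\phi(q)$ admissible classes, counts $P$ by Siegel--Walfisz (legitimate here since $P>x^{1/2}$), and bounds $\sum 1/m$ from below by Barban's lemma for nonnegative multiplicative functions over squarefree integers. The paper's route has the advantage that every constant appearing is absolute or depends only on $F$ and $K$, so uniformity in $q$ is transparent; restricting to squarefree $m$ also sidesteps any dependence on the unspecified values $f(p^k)$, $k\ge2$. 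By contrast, the Wirsing/Levin--Fa\u{\i}nle\u{\i}b asymptotic $\Sigma(x)\sim c(q)\,x/(\log x)^{1-\alpha}$ comes with constants and error terms that depend on $\tau=\alpha(q)$, which ranges down to $(\log\log(3q))^{-D}$; you acknowledge that tracking this uniformity is ``the main technical obstacle'' but do not carry it out, and doing so is essentially the entire difficulty of the lower bound. Either complete that verification (including the passage from a logarithmic-mean inequality to a natural-mean lower bound) or adopt the paper's construction-plus-Barban argument, which is shorter and keeps the dependence on $q$ explicit throughout.
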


We treat separately the implicit upper and lower bounds in Proposition \ref{prop:scourfield}.

\subsection*{Upper bound} 
The following mean value estimate is a simple consequence of \cite[Theorem 01, p.\ 2]{HT88} (and also of the more complicated Theorem 03 from that same chapter).
\begin{lem}\label{lem:HT} Let $g$ be a  multiplicative function with $0\le g(n)\le 1$ for all $n$. For all $x\ge 3$, 
\[ \sum_{n \le x} g(n) \ll \frac{x}{\log{x}} \exp\left(\sum_{p \le x}\frac{g(p)}{p} \right). \]
Here the implied constant is absolute.
\end{lem}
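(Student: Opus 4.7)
The plan is to invoke the Hall--Tenenbaum mean value bound (Theorem 01 of the cited chapter), which for a non-negative multiplicative function $g$ satisfying a mild Chebyshev-type condition on $\sum_{p} g(p)\log p$ yields an estimate of the shape
\[ \sum_{n \le x} g(n) \ll \frac{x}{\log x}\sum_{n \le x}\frac{g(n)}{n}. \]
Under our hypothesis $0 \le g \le 1$, the required condition is automatic from Chebyshev's bound $\sum_{p \le y}\log p \ll y$, so the bound applies immediately.

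It then remains to estimate $\sum_{n \le x} g(n)/n$. By multiplicativity and positivity,
\[ \sum_{n \le x}\frac{g(n)}{n} \le \prod_{p \le x}\Bigl(1 + \frac{g(p)}{p} + \frac{g(p^2)}{p^2} + \cdots\Bigr), \]
and since $g(p^k) \le 1$ each local factor can be written in the form $(1 + g(p)/p)(1 + O(p^{-2}))$. The product of the error factors over all primes converges absolutely to an absolute constant (by comparison with $\prod_p(1+1/(p^2-p))$), and the remaining product is bounded by $\exp\bigl(\sum_{p \le x} g(p)/p\bigr)$ using $\log(1+y) \le y$ for $y \ge 0$. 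Combining these ingredients yields the stated inequality.

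The only substantive content lies in Theorem 01 itself; the deduction above is purely cosmetic. If one wished to reprove Theorem 01 in a self-contained way, the natural route would be via the Halberstam--Richert convolution identity $g \log = g \ast \Lambda_g$, where $\Lambda_g$ is supported on prime powers with $\Lambda_g(p^k) = (g(p^k)-g(p^{k-1}))\log p$. Summing this identity against $\log(x/n)$ and swapping the order of summation reduces matters to a Chebyshev-type estimate on $\sum_{p^k \le x} \Lambda_g(p^k)/p^k$. The main technical nuisance is handling $n$ close to $x$ (where $\log(x/n)$ is small), which is dispatched by a standard partial-summation trick and the removal of a thin range near the endpoint. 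Beyond correctly bookkeeping these routine manipulations, I anticipate no genuine obstacle.
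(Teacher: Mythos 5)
Your proposal is correct and follows the same route as the paper: the paper simply cites Hall--Tenenbaum's Theorem 01 and regards the deduction as immediate, and what you have written out is exactly that deduction — verifying the Chebyshev-type hypotheses of Theorem 01 under $0\le g\le 1$ and then bounding $\sum_{n\le x} g(n)/n$ via the Euler product $\prod_{p\le x}(1+g(p)/p)(1+O(p^{-2})) \ll \exp\bigl(\sum_{p\le x} g(p)/p\bigr)$. The closing sketch of how one would reprove Theorem 01 from scratch is accurate but, as you note, not needed.
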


If we set $g(n):= \1_{\gcd(f(n),q)=1}$, then the left-hand side of \eqref{eq:scourfieldestimate} is precisely $\sum_{n \le x} g(n)$. Note that the multiplicativity of $f$ implies the multiplicativity of $g$. The following lemma, due independently to Norton \cite[Lemma, p.\
669]{norton76} and Pomerance \cite[Remark 1]{pomerance77}, allows us to estimate the sums of $g(p)/p$ appearing in Lemma \ref{lem:HT}.

\begin{lem}\label{lem:NP} Let $q$ be a positive integer, and suppose $x$ is a real number with $x\ge \max\{3,q\}$. For each coprime residue class $a\bmod{q}$,
\[ \sum_{\substack{p \le x \\ p \equiv a\pmod{q}}} \frac{1}{p} = \frac{\log_2{x}}{\phi(q)} + \frac{1}{p_{q,a}} + O\left(\frac{\log{(3q)}}{\phi(q)}\right), \] where $p_{q,a}$ denotes the least prime congruent to $a$ modulo $q$.
\end{lem}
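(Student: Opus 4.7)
The plan is to prove Lemma~\ref{lem:NP} by combining the Brun--Titchmarsh sieve inequality with Mertens' theorem, isolating the contribution of the least prime $p_{q,a}$. Heuristically, primes equidistribute among the coprime residues mod $q$ with density $1/\phi(q)$, so $\sum_{p\le x,\ p\equiv a\pmod{q}}1/p$ should match Mertens' $\log_2 x/\phi(q)$ up to the anomalous contribution of the smallest prime in the class (captured by the $1/p_{q,a}$ term) and a sieve-level error of size $O(\log(3q)/\phi(q))$.

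I would first handle primes $p\le q$ in the class: since $(a,q)=1$ and $0<a<q$, the only such prime is $p=a$ itself (if $a$ is prime), contributing $1/p_{q,a}$ exactly when $p_{q,a}\le q$. Including the $1/p_{q,a}$ term of the statement introduces a discrepancy of at most $1/q = O(\log(3q)/\phi(q))$ in the complementary case. For primes $q<p\le x$, I would apply Brun--Titchmarsh, $\pi(t;q,a)\le 2t/(\phi(q)\log(t/q))$, and convert via partial summation to the upper bound
\[
\sum_{q<p\le x,\ p\equiv a\pmod{q}}\frac{1}{p} \le \frac{2\log_2(x/q)}{\phi(q)} + O\!\left(\frac{\log(3q)}{\phi(q)}\right).
\]
The correct leading constant $1/\phi(q)$ (rather than $2/\phi(q)$) is pinned down by a case split on the size of $q$: when $q\ge\log x$, the excess term $\log_2(x/q)/\phi(q)$ is itself of size $O(\log(3q)/\phi(q))$ and absorbed by the error; when $q<\log x$, the prime number theorem in arithmetic progressions (Siegel--Walfisz) applies directly and yields the correct asymptotic with constant $1/\phi(q)$ via partial summation.

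The main obstacle is this factor-of-two gap between Brun--Titchmarsh and the true prime density, which is most delicate in the transitional range where $q$ is a fixed power of $\log x$. Bridging this range cleanly requires either a sharpening of Brun--Titchmarsh via Selberg's $\Lambda^2$ sieve together with a complementary lower-bound sieve, or a careful integration of Linnik's bound $p_{q,a}\le q^L$ with the Mertens identity $\sum_{p\le x,\ (p,q)=1}1/p = \log_2 x + O(\log_2(3q))$ to force consistency with the correct per-class constant. The $1/p_{q,a}$ summand in the statement elegantly absorbs the irregularity of the least prime, which for generic $a$ is of size $\phi(q)\log q$ but can be as small as $2$ in specific classes.
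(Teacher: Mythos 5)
The paper cites this lemma to Norton and Pomerance without proof, so there is no in-paper argument to compare against; I'll assess your proposal on its own terms. Your high-level plan — Brun--Titchmarsh for $p>q$, isolating $1/p_{q,a}$, then a case split with Siegel--Walfisz taking over for $q<\log x$ — is a viable route, but as written the proposal leaves a genuine hole, and your concluding paragraph misdiagnoses it.

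When $q<\log x$, you cannot ``apply Siegel--Walfisz directly via partial summation'' over the whole range $q<p\le x$: Siegel--Walfisz with parameter $A=1$ controls $\pi(t;q,a)$ only for $t\ge e^{q}$, and for $t$ near $q$ the Siegel--Walfisz error is useless. The missing step is to split at $e^{q}$. On $(q,e^{q}]$, Brun--Titchmarsh with partial summation gives $\sum 1/p \ll \frac{1}{\phi(q)}\log\log(e^{q}/q)+\frac{1}{\phi(q)} \ll \frac{\log(3q)}{\phi(q)}$, which is within the permitted error despite the factor of $2$. On $(e^{q},x]$, Siegel--Walfisz supplies the main term $\frac{1}{\phi(q)}(\log_2 x-\log q)+O(\log(3q)/\phi(q))$, with the error term contributing, after partial summation, $\ll\int_{q}^{\log x}e^{-c\sqrt{u}}\,du\ll\log(3q)/\phi(q)$. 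Once this split is in place there is no ``delicate transitional range,'' and the factor of $2$ in Brun--Titchmarsh never threatens the main term. The two alternative fixes you gesture at do not save the argument: a lower-bound sieve still cannot produce the asymptotic constant $1/\phi(q)$ pointwise in a single class, and ``Linnik plus Mertens'' cannot either, since Mertens controls only the average over the $\phi(q)$ classes and Brun--Titchmarsh upper bounds leave far too much slack (half the classes could carry twice the average while the rest carry nothing, consistent with both). Some input from the prime number theorem in arithmetic progressions is unavoidable in your route; Siegel--Walfisz suffices, though it renders your implied constant ineffective, unlike the elementary Brun--Titchmarsh-plus-Mertens proofs of Norton and Pomerance being cited.
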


\begin{lem}\label{lem:primesum} Let $F(T)\in \Z[T]$ be a fixed nonconstant polynomial. For each positive integer $q$ and each real number $x\ge 3q$, \[ \sum_{p \le x} \frac{\1_{\gcd(F(p),q)=1}}{p} = \alpha \log_2{x} + O((\log\log{(3q)})^{O(1)}),  \]
where $\alpha=\alpha(q)$ is as defined in \eqref{eq:alphaqdef}.
\end{lem}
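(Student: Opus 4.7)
The plan is to decompose by coprime residue classes modulo $q$ and invoke Lemma~\ref{lem:NP}.

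The primes $p\mid q$ contribute at most $\sum_{p\mid q}1/p \ll \log_3(3q)$ by Mertens on the $\omega(q) \ll \log(3q)/\log\log(3q)$ smallest primes, which is absorbed into the target error. For $p\nmid q$, $\gcd(F(p),q)=1$ iff $p\bmod q \in A$, where $A:=\{a\in(\Z/q)^*:\gcd(aF(a),q)=1\}$ has $|A|=\phi(q)\alpha$ by the CRT factorization of $\alpha$. Applying Lemma~\ref{lem:NP} to each $a\in A$ and summing yields
\[ \sum_{a\in A}\sum_{\substack{p\le x\\ p\equiv a\pmod q}}\frac{1}{p} = \alpha\log_2 x + \sum_{a\in A}\frac{1}{p_{q,a}} + O\bigl(\alpha\log(3q)\bigr). \]
The middle sum is controlled by a pigeonhole: the least primes $p_{q,a}$ for distinct coprime classes $a$ are themselves distinct, so $\sum_{a\in A}1/p_{q,a}\le \sum_{k\le|A|}1/p_k \ll \log\log(|A|+3) \ll \log\log(3q)$ by Mertens, where $p_k$ denotes the $k$th prime.

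The residual $O(\alpha\log(3q))$ error is the principal technical obstacle---summed naively it can reach $\log(3q)$, exceeding the target $(\log\log(3q))^{O(1)}$ unless $\alpha$ is very small. To reduce it, I would split at an intermediate threshold $y:=q(\log(3q))^C$: the contribution from $p\le y$ is bounded uniformly by $\log\log(3q)+O(1)$ via the total Mertens sum (which upper-bounds the subset sum over $A$), while for $p>y$ one invokes PNT in arithmetic progressions class-by-class. In the main regime of interest, $q\le(\log x)^K$, Siegel--Walfisz delivers per-class tails of $(\log_2 x-\log_2 y)/\phi(q)$ up to negligible error, and summing over $a\in A$ yields $\alpha(\log_2 x-\log_2 y)+o(1) = \alpha\log_2 x+O(\log\log(3q))$, since $\alpha\log_2 y = O(\log\log(3q))$. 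Combining with the initial segment gives the claim.

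The hardest step is balancing the threshold $y$: one must choose it large enough for Siegel--Walfisz to apply for all $p>y$ (requiring $q\le(\log y)^N$) but small enough that the initial segment stays $(\log\log(3q))^{O(1)}$. Outside the Siegel--Walfisz regime, a more delicate Brun--Titchmarsh-plus-CRT argument in the spirit of Scourfield~\cite{scourfield84} is needed---one exploits the factorization $A=\prod_{\ell\mid q}A_\ell$ and bootstraps the subset-error bound from the total Mertens identity $\sum_{(a,q)=1}[T_a(x)-\log_2 x/\phi(q)-1/p_{q,a}]=O(\log\log(3q))$ to extract the polylog-in-$\log q$ cancellation needed.
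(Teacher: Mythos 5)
Your main-term computation and pigeonhole bound on $\sum_{a\in A}1/p_{q,a}$ are fine, and you have correctly diagnosed the danger: a direct class-by-class application of Lemma~\ref{lem:NP} at modulus $q$ contributes an error of $O(\alpha\log(3q))$ per the $O(\log(3q)/\phi(q))$ error in each of the $\alpha\phi(q)$ classes, and $\alpha\log(3q)$ can be of genuine order $\log q$ (e.g., when $\alpha\asymp1$), far exceeding the target $(\log\log(3q))^{O(1)}$. The trouble is that your proposed repair does not close the gap. With the threshold $y=q(\log(3q))^C$, Siegel--Walfisz applied to primes in $(y,x]$ needs $q\le(\log y)^N$ for some fixed $N$, i.e.\ $q\le(\log q)^N$ up to constants, which fails for every large $q$; conversely, if you inflate $y$ enough for Siegel--Walfisz to be applicable throughout $(y,x]$ (say $y\ge\exp(q^{\varepsilon})$), then $\log_2 y\gg\log q$ and your initial-segment bound from the total Mertens sum is no longer $O(\log\log(3q))$. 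The two requirements on $y$ are incompatible, and the lemma must hold for \emph{all} $q$ with $x\ge 3q$, not just for $q\le(\log x)^K$. The concluding appeal to a Scourfield-style ``Brun--Titchmarsh-plus-CRT'' argument is exactly where the actual content would need to go, but it is left as a gesture.

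The paper avoids this obstacle by never applying Lemma~\ref{lem:NP} at modulus $q$. It uses M\"obius inversion to write $\1_{\gcd(F(p),q)=1}=\sum_{d\mid q}\mu(d)\1_{d\mid F(p)}$ and then, for each squarefree $d\mid q$, applies Lemma~\ref{lem:NP} at modulus $d$: for $p>3q$, $d\mid F(p)$ places $p$ in one of $\nu(d)$ coprime classes mod $d$, yielding main term $\sum_{d\mid q}\mu(d)\nu(d)/\phi(d)\cdot\log_2 x=\alpha\log_2 x$ with per-$d$ error $O(\nu(d)(\log(3d)+\log_2(3q))/\phi(d))$. Because $\nu(d)/\phi(d)$ decays multiplicatively and $\log(3d)$ is handled by the additive decomposition $\log(3d)=\log3+\sum_{\ell\mid d}\log\ell$, the sum of errors over $d\mid q$ comes out to $O((\log_2(3q))^{D+1})$, as required. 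This is precisely the ``exploiting the factorization $A=\prod_\ell A_\ell$'' that you anticipate needing, but realized cleanly via inclusion--exclusion over divisors rather than over residue classes. In short: the decomposition by squarefree divisors $d\mid q$, not by coprime classes mod $q$, is the key idea you are missing, and it renders the threshold-and-PNT machinery unnecessary.
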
 

\begin{proof} Using the M\"{o}bius function to detect the coprimality condition, we write
\begin{align} \sum_{\substack{p \le x \\ \gcd(F(p),q)=1}} \frac{1}{p} &= \sum_{\substack{3q < p \le x \notag \\ \gcd(F(p),q)=1}} \frac{1}{p} + O(\log_{2}(100q)) \\
&= \sum_{d \mid q} \mu(d) \sum_{\substack{3q < p \le x \\ d\mid F(p)}} \frac{1}{p} + O(\log_{2}(100q)).\label{eq:primesum2}
\end{align}
If $p$ is a prime with $p > 3q$, then $d\mid F(p)$ precisely when $p$ belongs to one of $\nu(d)$ coprime residue classes modulo $d$. By Lemma \ref{lem:NP} (with $d$ replacing $q$), \[ \sum_{\substack{3q < p \le x \\ d\mid F(p)}} \frac{1}{p} = \frac{\nu(d)}{\phi(d)} \log\log{x} + O\left(\frac{\nu(d)\log(3d)}{\phi(d)} + \frac{\nu(d) \log_2(3q)}{\phi(d)}\right). \]  
Substituting this estimate into \eqref{eq:primesum2} yields a main term of $(\sum_{d\mid q} \frac{\mu(d)\nu(d)}{\phi(d)})\log_2{x} = \alpha \log_2{x}$, as desired. Turning to the errors,
\begin{align*} \sum_{\substack{d \mid q\\d\text{ squarefree}}}  \frac{\nu(d) \log(3d)}{\phi(d)} &= \sum_{\substack{d \mid q \\ d\text{ squarefree}}} \frac{\nu(d)}{\phi(d)} (\log{3}+\sum_{\ell \mid d} \log(\ell)) 
\\
&\le (\log{3})\sum_{\substack{d \mid q \\ d\text{ squarefree}}}\frac{\nu(d)}{\phi(d)}+\sum_{\ell \mid q} \log{\ell} \cdot \frac{\nu(\ell)}{\ell-1} \sum_{\substack{r \mid q/\ell \\r \text{ squarefree}}}\frac{\nu(r)}{\phi(r)} \\
&\ll \bigg(\sum_{\substack{d \mid q \\ d\text{ squarefree}}} \frac{\nu(d)}{\phi(d)}\bigg)\bigg(1+ \sum_{\ell \mid q}\log{\ell}\cdot \frac{\nu(\ell)}{\ell-1}\bigg).\end{align*}
Now $\sum_{d \mid q,~d\text{ squarefree}} \frac{\nu(d)}{\phi(d)} = \prod_{\ell \mid q}(1+\nu(\ell)/(\ell-1)) \ll (\log_{2}(3q))^{D}$ 
(keeping in mind that $\nu(\ell)\le D$ for all but $O(1)$
many primes $\ell$). Furthermore,
\[ \sum_{\ell \mid q} \nu(\ell)\frac{\log{\ell}}{\ell-1} \ll \sum_{\ell \mid q} \frac{\log{\ell}}{\ell} \le \sum_{\ell \le \log{(3q)}} \frac{\log{\ell}}{\ell} + \sum_{\substack{\ell \mid q \\ \ell > \log(3q)}} \frac{\log{\ell}}{\ell} \ll \log_2{(3q)} + \frac{\log_2{(3q)}}{\log{(3q)}}\sum_{\substack{\ell\mid q \\ \ell > \log(3q)}} 1, \]
and this is 
\[ \ll \log_2{(3q)} + \frac{\log_2{(3q)}}{\log(3q)} \cdot \frac{\log{q}}{\log_2{(3q)}} \ll \log_2{(3q)}. \]
Thus, $\sum_{d \mid q,~d\text{ squarefree}} \frac{\nu(d)\log{(3d)}}{\phi(d)} \ll (\log_{2}(3q))^{D+1}$. Finally, 
\[ \sum_{\substack{d \mid q \\ d\text{ squarefree}}} \frac{\nu(d) \log_2{(3q)}}{\phi(d)} \ll \log_2{(3q)} \cdot \prod_{\ell \mid q} \left(1 + \frac{\nu(\ell)}{\ell-1} \right)\ll (\log_2{(3q)})^{D+1}. \]
Collecting estimates, $\sum_{p\le x} \1_{\gcd(F(p),q)=1}/p = \alpha\log_2{x} + O((\log_2{(3q)})^{D+1})$. 
\end{proof}

The upper bound half of Proposition \ref{prop:scourfield} follows (in slightly more precise form) immediately from Lemmas \ref{lem:HT} and \ref{lem:primesum}. In fact, we have shown the upper bound in the much wider range $q\le x/3$.

\subsection*{Lower bound}
The following lemma is due to Barban \cite[Lemma 3.5]{barban66}; see also \cite[Theorem 3.5, p.\ 61]{SS94}.

\begin{lem}\label{lem:barban} Let $g$ be a multiplicative function with $0 \le g(n) \le 1$ for all $n$. For all $x\ge 3$,
\[ \sum_{\substack{n \le x \\n \text{ squarefree}}} \frac{g(n)}{n} \gg \exp\left(\sum_{p \le x} \frac{g(p)}{p}\right). \]
Here the implied constant is absolute.
\end{lem}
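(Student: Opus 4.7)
The plan is to compare the squarefree sum $A(x) := \sum_{n \le x,\,n\text{ sqfree}} g(n)/n$ to the truncated Euler product $P(y) := \prod_{p \le y}(1+g(p)/p)$, taking $y=\sqrt{x}$. Since restricting to $P(n) \le y$ only drops nonnegative terms,
\[A(x) \ge \sum_{\substack{n \le x \\ P(n) \le y \\ n\text{ sqfree}}} \frac{g(n)}{n} = P(y) - T, \qquad T := \sum_{\substack{n > x \\ P(n) \le y \\ n\text{ sqfree}}} \frac{g(n)}{n}.\]
The main task is to show that $T \le (1-c)\,P(y)$ for some absolute $c>0$, from which $A(x) \gg P(y)$ follows.

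To bound $T$, I would apply Rankin's trick: for any $\sigma \in (0,1)$,
\[T \le x^{-\sigma}\prod_{p \le y}(1+g(p)\,p^{\sigma-1}).\]
Using $g(p)\le 1$ and $\log(1+u)\le u$, the ratio $\prod_{p \le y}(1+g(p)p^{\sigma-1})/P(y)$ is bounded by $\exp\bigl(\sum_{p \le y}(p^\sigma-1)/p\bigr)$. Taylor-expanding $p^\sigma - 1 = e^{\sigma \log p}-1$ and invoking the Mertens estimate $\sum_{p \le y}\log p/p = \log y + O(1)$ yields $\sum_{p \le y}(p^\sigma-1)/p \le \sigma \log y + O((\sigma \log y)^2)$. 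Choosing $\sigma = c/\log y$ for a sufficiently small fixed $c>0$, the quotient becomes $\le e^{c+O(c^2)}$, while $x^{-\sigma} = e^{-2c}$, giving $T/P(y) \le e^{-c+O(c^2)} < 1$. Thus $A(x) \gg P(y)$.

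Finally, the inequality $\log(1+g(p)/p) \ge g(p)/p - (g(p)/p)^2$ summed over primes (the error series being absolutely convergent) gives $P(y) \gg \exp(\sum_{p \le y} g(p)/p)$. Since $\sum_{y < p \le x} g(p)/p \le \sum_{y < p \le x} 1/p = \log 2 + o(1)$, we conclude $A(x) \gg \exp(\sum_{p \le x} g(p)/p)$, as claimed. The main obstacle in this approach is producing a \emph{strictly} sub-multiplicative tail bound $T \le (1-c)\,P(y)$ rather than the trivial $T \le P(y)$, which forces a careful balance between the decay factor $x^{-\sigma}$ and the Rankin quotient $\prod(1+gp^{\sigma-1})/\prod(1+g/p) \ge 1$; taking $\sigma$ to be a small but positive multiple of $1/\log y$ makes this tradeoff work.
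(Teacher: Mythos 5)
The paper does not actually prove this lemma: it cites Barban \cite{barban66} and Schwarz--Spilker \cite{SS94} and moves on. Your argument via a truncated Euler product together with Rankin's trick is a perfectly standard way to prove such a ``quasi-Mertens'' lower bound, and it is essentially correct. The structure is right: restrict to $y$-smooth squarefree $n$ to compare with $P(y)$, apply Rankin with $\sigma\asymp 1/\log y$ to show the tail $T$ is a bounded proportion of $P(y)$, and then extend from $\sum_{p\le y}$ to $\sum_{p\le x}$ at the cost of an absolute constant since $\sum_{y<p\le x}1/p=O(1)$ when $y=\sqrt x$. The step $T\le x^{-\sigma}\prod_{p\le y}(1+g(p)p^{\sigma-1})$ and the bound on the Rankin quotient by $\exp\bigl(\sum_{p\le y}(p^\sigma-1)/p\bigr)$ are both correct.

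One small detail deserves care. From $\sum_{p\le y}\log p/p=\log y+O(1)$ alone you get
\[
\sum_{p\le y}\frac{p^\sigma-1}{p}\le (e^{\sigma\log y}-1)\Bigl(1+\frac{O(1)}{\log y}\Bigr)
=\sigma\log y+O\!\left(\frac{\sigma\log y}{\log y}\right)+O\bigl((\sigma\log y)^2\bigr),
\]
and the middle term $O(\sigma)=O(\sigma\log y/\log y)$ is \emph{not} absorbed by $O((\sigma\log y)^2)$ unless $\log y$ is large. With $\sigma\log y=c$ a small absolute constant and $y$ bounded, the exponent $-2c+c+O(c/\log y)+O(c^2)$ need not be negative, so the chain $T/P(y)\le e^{-c+O(c^2)}<1$ does not close as stated. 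This is easy to repair: either invoke the (true, but stronger) inequality $\sum_{p\le y}\log p/p\le\log y$ in place of the $O(1)$ form of Mertens, or simply dispose of small $x$ first --- for $3\le x\le x_0$ the left-hand side is at least $g(1)=1$ while the right-hand side is $O_{x_0}(1)$, so the lemma is trivial there, and the Rankin computation is only needed once $y$ is large enough that $O(1)/\log y$ is, say, at most $1/2$. You should add one of these remarks to make the ``for all $x\ge3$ with absolute implied constant'' claim airtight.
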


\begin{proof}[Proof of the lower bound in Proposition \ref{prop:scourfield}] Consider $n$ of the form $mP$, where $m\le x^{1/3}$ is a squarefree product of primes $p$ with $\gcd(f(p),q)=1$ and $P \in (x^{1/2},x/m]$ is a prime with $(f(P),q)=1$. Each such $n$ has $f(n) = f(m)f(P)$ coprime to $q$. 

Given $m$ as above, we count corresponding $P$\!. The prime $P$ is restricted to one of the $\alpha(q) \phi(q)$ residue classes $a$ mod $q$ with $\gcd(aF(a),q)=1$. Hence, given $m\le x^{1/3}$ as above, the Siegel--Walfisz theorem guarantees that there are \[ \gg (\alpha(q)\phi(q)) \cdot \frac{1}{\phi(q)} \frac{x}{m\log{x}} = \alpha(q) \frac{x}{m\log{x}} \] values of $P$. Now sum on $m$; by Lemma \ref{lem:barban},
\[ \sum \frac{1}{m} = \sum_{\substack{m \le x^{1/3}\\m~\text{squarefree}}} \frac{\1_{\gcd(f(m),q)=1}}{m}
\gg \exp\left(\sum_{p \le x^{1/3}} \frac{\1_{\gcd(f(p),q)=1}}{p}\right). \]
The final sum on $p$ is within $O(1)$ of the corresponding sum taken over all $p \le x$. The lower bound half of Proposition \ref{prop:scourfield} now follows from Lemma \ref{lem:primesum}, bearing in mind that $\alpha(q) \gg (\log\log{(3q)})^{-D}$.
\end{proof}

\section{Framework for the proof of Theorems \ref{thm:generalF} and \ref{thm:severallarge}} \label{sec:framework}
Define $J=J(x)$ by setting
\[ J=\lfloor \log\log\log{x}\rfloor. \]
(For our purposes, any integer-valued function tending to infinity sufficiently slowly would suffice.) With $\delta$ from the statement of Theorem \ref{thm:generalF}, we let $y=y(x)$ be defined by
\[ y := \exp((\log{x})^{\delta/2}) 
\] 
and we say that the positive integer $n$ is \textsf{convenient} (with respect to a given large real number $x$) if (a) $n\le x$, (b) the $J$ largest prime factors of $n$ exceed $y$, and (c) none of these $J$ primes are repeated in $n$. That is, $n$ is convenient if $n$ admits an expression $n=mP_J\cdots P_1$, where $P_1, \dots, P_J$ are primes with
\begin{equation}\label{eq:primesrestriction1} \max\{P(m),y\} < P_J < \dots < P_1, \end{equation}
\begin{equation}\label{eq:primesrestriction2} P_J \cdots P_1 \le x/m. \end{equation}
The framework developed in this section will go through in the proof of Theorem \ref{thm:severallarge} (\S\ref{sec:severallarge}) by setting $\delta:=1$.   

Now let $f$ be a fixed multiplicative function with $f(p)=F(p)$ for all primes $p$, where $F(T)\in \Z[T]$ is nonconstant. Fix $K>0$, and suppose that $q\le (\log{x})^{K}$. We set
\[ N(q) = \#\{n\le x: \gcd(f(n),q)=1\}, \]
and we define $N_\con(q)$ and $N_{\inc}(q)$ analogously, incorporating the extra requirement that $n$ be convenient or inconvenient, respectively.

\begin{lem}\label{lem:nqvsnqcon} $N(q)\sim N_{\con}(q)$, as $x\to\infty$. Here the asymptotic holds uniformly in $q$ with $q\le (\log{x})^{K}$ and $\alpha(q)\ne 0$. \end{lem}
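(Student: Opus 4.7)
The plan is to prove $N_{\inc}(q) := N(q) - N_{\con}(q) = o(N(q))$, uniformly in the stated range. Unpacking the definition, an integer $n \le x$ is convenient precisely when $P_1(n) > P_2(n) > \cdots > P_J(n) > \max(P_{J+1}(n),\, y)$. Hence every inconvenient $n \le x$ falls into (at least) one of two classes: \emph{(A)} $P_J(n) \le y$, so that $n$ has fewer than $J$ prime factors (with multiplicity) exceeding $y$; or \emph{(B)} there is a prime $p > y$ with $p^2 \mid n$ (forced by any repetition within the top $J$ once $P_J(n) > y$). I would bound the counts of inconvenient $n \le x$ with $(f(n), q) = 1$ separately in each class and compare to the lower bound on $N(q)$ coming from Proposition \ref{prop:scourfield}.

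Class (B) is trivial: the number of $n \le x$ divisible by $p^2$ for some $p > y$ is at most $\sum_{p > y} \lfloor x/p^2 \rfloor \ll x/y = x\exp(-(\log x)^{\delta/2})$. On the other hand, Proposition \ref{prop:scourfield} gives $N(q) \gg x(\log x)^{\alpha - 1}\exp(-O((\log\log(3q))^{O(1)}))$, and since $q \le (\log x)^K$ forces $\log\log(3q) \le \log\log\log x + O(1)$, the contribution of class (B) is negligible beside $N(q)$.

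Class (A) is the main case and is handled by a Rankin-type argument. Fix $z \in (0,1)$ (take $z = \tfrac{1}{2}$) and introduce the multiplicative function $g(n) := \1_{(f(n), q) = 1}\, z^{\Omega_{>y}(n)}$, where $\Omega_{>y}(n)$ counts prime factors of $n$ exceeding $y$ with multiplicity. Since $z^k \ge z^{J-1}$ whenever $k \le J-1$, the class-(A) count is at most $z^{-(J-1)}\sum_{n \le x} g(n)$. I would then invoke Lemma \ref{lem:HT} to bound $\sum_{n\le x} g(n) \ll (x/\log x)\exp\bigl(\sum_{p \le x} g(p)/p\bigr)$, split the prime sum at $y$, and apply Lemma \ref{lem:primesum} to each piece (permissible since $y > 3q$ once $x$ is large). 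Using $\log\log y = (\delta/2)\log\log x$, this evaluates to $\alpha \log\log x \cdot[\delta/2 + z(1 - \delta/2)] + O((\log\log(3q))^{O(1)})$. Dividing by the lower bound on $N(q)$ yields
\[ \frac{\#(A)}{N(q)} \ll z^{-(J-1)}\,(\log x)^{-\alpha(1-z)(1-\delta/2)}\exp\bigl(O((\log\log(3q))^{O(1)})\bigr). \]

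The final, and most delicate, step is to verify this ratio tends to zero uniformly. With $z = \tfrac{1}{2}$ and $J \le \log\log\log x$, the prefactor $z^{-(J-1)}$ is at most $(\log\log x)^{\log 2}$, and the error $\exp(O((\log\log(3q))^{O(1)}))$ is of the form $\exp((\log\log\log x)^{O(1)})$. The genuine power-of-$\log x$ saving $(\log x)^{-\alpha(1-z)(1-\delta/2)}$ must overcome both; this is exactly where the a priori bound $\alpha(q) \gg (\log\log(3q))^{-D}$ of \eqref{eq:alphaqlower} is decisive. Indeed, it gives $\alpha \log\log x \gg \log\log x/(\log\log\log x)^D \to \infty$, which dominates every $(\log\log\log x)^{O(1)}$ correction. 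I expect this comparison -- confirming that the (possibly very small) $\alpha(q)$ still produces a power saving strong enough to absorb the Scourfield-type correction terms -- to be the main technical point; once it is in hand, the lemma follows.
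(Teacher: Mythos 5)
Your proof is correct, and it takes a genuinely different route from the paper's. The paper first throws away $n$ with $P(n)\le x^{1/\log_2 x}$ (via smooth-number estimates) and $n$ with a repeated prime $>y$, then for the remaining inconvenient $n$ peels off $P=P(n)$ explicitly: writing $n=PAB$ with $A$ the $y$-rough part of $n/P$ and $B$ the $y$-smooth part, it bounds the number of choices for $P$ by $\pi(x/AB)\ll x\log_2 x/(AB\log x)$, bounds $\sum 1/A$ by $(1+\sum_{p\le x}1/p)^{J}\le \exp(O((\log_3 x)^2))$ using $\Omega(A)<J$, and bounds $\sum 1/B$ by an Euler product over $p\le y$ evaluated via Lemma~\ref{lem:primesum}. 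Your Rankin-type approach bypasses the smooth-number reduction and the explicit $n=PAB$ factorization: you weight by $z^{\Omega_{>y}(n)}$, feed the resulting multiplicative $g$ with $0\le g\le 1$ directly into Lemma~\ref{lem:HT}, and then evaluate $\sum_{p\le x}g(p)/p$ by splitting at $y$ and applying Lemma~\ref{lem:primesum} to each range. Both arguments land on a bound of the shape $x(\log x)^{-(1-c\alpha)}\exp(O((\log_3 x)^{O(1)}))$ with $c<1$, and both then finish by comparing against the lower bound from Proposition~\ref{prop:scourfield} and using $\alpha(q)\gg(\log_2(3q))^{-D}$ so that $\alpha\log_2 x$ overwhelms the $(\log_3 x)^{O(1)}$ corrections. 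The one thing the paper's more hands-on decomposition buys is reusability: the same $n=PAB$ peeling, with Brun--Titchmarsh in place of the crude $\pi(x/AB)$ bound, is what lets the authors save an extra factor of $\phi(q)$ when verifying Hypothesis B; your Rankin shortcut does not immediately extend to that later step.
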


\begin{proof} We must show that $N_{\inc}(q) = o(N(q))$, as $x\to\infty$.

Suppose the integer $n\le x$ is counted by $N_{\inc}(q)$. We can assume that $P(n) > z:= x^{1/\log_2{x}}$. Indeed, by well-known results on smooth numbers (for instance \cite[Theorem 5.13\text{ and }Corollary 5.19, Chapter III.5]{tenenbaum15}), 
the number of $n\le x$ with $P(n) \le z$ is at most $x/(\log{x})^{(1+o(1))\log_3 x}$ 
and this is $o(N(q))$ by our `rough-and-ready' estimate of Proposition \ref{prop:scourfield}. We can similarly assume that $n$ has no repeated prime factors exceeding $y$, since the number of exceptions is $O(x/y)$, which is again $o(N(q))$.

Write $n=PAB$, where $P= P(n)$ and $A$ is the largest divisor of $n/P$ supported on primes exceeding $y$.  Observe that $AB = n/P \le x/z$. So if $A$ and $B$ are given, the number of possibilities for $P$ is bounded by $\pi(x/AB) \ll x/AB \log{z} \ll x(\log\log{x})/AB\log{x}$. 
We sum on $A,B$. As $n$ has no repeated primes exceeding $y$ but $n$ is inconvenient, it must be that $\Omega(A) < J$.  Thus, $\sum 1/A \le (1+ \sum_{p \le x} 1/p)^{J} \le (2\log_2{x})^{J} \le \exp(O((\log_3 x)^2))$. Using that $(f(B),q)=1$ (as $f(n) = f(B) f(AP)$) and that $B$ is $y$-smooth, 
\[ \sum \frac{1}{B} \le \prod_{p \le y} \left(\sum_{j=0}^{\infty} \frac{\1_{(f(p^j),q)=1}}{p^j}\right)\ll \exp\left(\sum_{p\le y} \frac{\1_{(f(p),q)=1}}{p}\right), \]
and this is $\ll (\log{x})^{\alpha\delta/2} \exp(O((\log_2 q)^{O(1)}))$ by Lemma \ref{lem:primesum}. 
We conclude that these $n$ make a contribution to $N_{\inc}(q)$ of size at most $\frac{x}{(\log{x})^{1-\alpha\delta/2}} \exp(O((\log_3{x})^2 + (\log_2 q)^{O(1)}))$. 
Since  $q\le (\log{x})^{K}$ and $\alpha(q)$ obeys the lower bound \eqref{eq:alphaqlower}, this contribution is also $o(N(q))$. 
\end{proof}

Let $N(q,a)$ denote the number of $n \le x$ with $f(n)\equiv a\pmod{q}$, and define  $N_{\con}(q,a)$ and $N_{\inc}(q,a)$ analogously. By Lemma \ref{lem:nqvsnqcon}, the weak equidistribution of $f$ mod $q$ will follow if $N(q,a) \sim \frac{1}{\phi(q)} N_{\con}(q)$. 

As a first step in this direction, we compare $N_{\con}(q)$ and $N_{\con}(q,a)$.  Clearly,
\[ N_{\con}(q) = \sum_{\substack{m \le x \\ \gcd(f(m),q)=1}} \sideset{}{'}\sum_{P_1, \dots, P_J} 1,\]
where the $'$ on the sum indicates that $P_1,\dots,P_J$ run through primes satisfying \eqref{eq:primesrestriction1}, \eqref{eq:primesrestriction2},
and
\begin{equation}\label{eq:primesrestriction3} \gcd(f(P_1) \cdots f(P_J),q)=1.
\end{equation}
Similarly,
\[ N_{\con}(q,a) = \sum_{\substack{m \le x \\ \gcd(f(m),q)=1}} \sideset{}{''}\sum_{P_1, \dots, P_J} 1, \]
where the $''$ condition indicates that we enforce \eqref{eq:primesrestriction1},  \eqref{eq:primesrestriction2} and (in place of \eqref{eq:primesrestriction3})
\begin{equation}\label{eq:primesrestriction4} f(m) f(P_1) f(P_2) \cdots f(P_J) \equiv a\pmod{q}. \end{equation}

Let 
\[ \V_q' = \{(v_1,\dots,v_J)\bmod{q}: \gcd(v_1 \dots v_J,q)=1, \gcd(F(v_1)\cdots F(v_J),q)=1\}  \] and
\[ \V_{q, a, m}'' = \{(v_1, \dots, v_J)\bmod{q}: \gcd(v_1 \dots v_J,q)=1, f(m) F(v_1)\cdots F(v_J) \equiv a\pmod{q}\}.  \]
Then \eqref{eq:primesrestriction3} amounts to restricting $(P_1, \dots, P_J)$, taken mod $q$, to belong to $\V_q'$, while \eqref{eq:primesrestriction4} restricts this same tuple to $\V_{q, a, m}''$. By \eqref{eq:alphaqdef}, $\#\V_q' = (\phi(q)\alpha(q))^{J}$. 

The conditions \eqref{eq:primesrestriction2} and \eqref{eq:primesrestriction3} are independent of the ordering of $P_1,\dots, P_J$. Thus, letting $L_m = \max\{y, P(m)\}$, \begin{equation}\label{eq:toremovecongruences}  \sideset{}{'}\sum_{P_1, \dots, P_J} 1 = \frac{1}{J!} \sum_{\mathbf{v} \in \V_q'}  \sum_{\substack{P_1,\dots, P_J\text{ distinct} \\ P_1\cdots P_J \le x/m \\ \text{each } P_j > L_m \\ \text{each } P_j \equiv v_j\pmod{q}}} 1. \end{equation} 

We proceed to remove the congruence conditions on the $P_j$ from the inner sum. For each tuple $(v_1,\dots, v_J)\bmod{q} \in \V_q'$,
\[ \sum_{\substack{P_1,\dots, P_J\text{ distinct} \\ P_1\cdots P_J \le x/m \\ \text{each } P_j > L_m \\ \text{each } P_j \equiv v_j\pmod{q}}} 1 =  \sum_{\substack{P_2,\dots, P_J\text{ distinct} \\ P_2\cdots P_J \le x/m L_m \\ \text{each } P_j > L_m \\ \text{each } P_j \equiv v_j\pmod{q}}} \sum_{\substack{P_1 \neq P_2,\dots,P_J \\ L_m < P_1 \le x/mP_2\cdots P_J \\ P_1 \equiv v_1\pmod{q}}} 1.\]

Since $L_m \ge y$ and $q\le (\log{x})^{K} = (\log{y})^{2K/\delta}$, the Siegel--Walfisz theorem implies that 
\[ \sum_{\substack{P_1 \neq P_2,\dots,P_J \\ L_m < P_1 \le x/mP_2\cdots P_J \\ P_1 \equiv v_1\pmod{q}}} 1 = \frac{1}{\phi(q)} \sum_{\substack{P_1 \neq P_2,\dots,P_J \\ L_m < P_1 \le x/mP_2\cdots P_J}}1 + O\left(\frac{x}{mP_2\cdots P_J} \exp(-C_0 \sqrt{\log y})\right),\] 
for some positive constant $C_0 := C_0(K, \delta)$ depending only on $K$ and $\delta$. Putting this back into the last display and bounding the $O$-terms crudely, we find that
\[ \sum_{\substack{P_1,\dots, P_J\text{ distinct} \\ P_1\cdots P_J \le x/m \\ \text{each } P_j > L_m \\ \text{each } P_j \equiv v_j\pmod{q}}} 1 =  \frac{1}{\phi(q)}\sum_{\substack{P_1,\dots, P_J\text{ distinct} \\ P_1\cdots P_J \le x/m \\ \text{each } P_j > L_m \\ (\forall j\ge 2)~P_j \equiv v_j\pmod{q}}} 1 + O\left(\frac{x}{m} \exp\left(-\frac{1}{2} C_0 (\log{x})^{\delta/4}\right)\right).  \] 
Proceeding in the same way to remove the congruence conditions on $P_2,\dots,P_J$, we arrive at the estimate
\[ \sum_{\substack{P_1,\dots, P_J\text{ distinct} \\ P_1\cdots P_J \le x/m \\ \text{each } P_j > L_m \\ \text{each } P_j \equiv v_j\pmod{q}}} 1 =  \frac{1}{\phi(q)^{J}}\sum_{\substack{P_1,\dots, P_J\text{ distinct} \\ P_1\cdots P_J \le x/m \\ \text{each } P_j > L_m}} 1 + O\left(\frac{x}{m} \exp\left(-\frac{1}{4} C_0 (\log{x})^{\delta/4}\right)\right).\] 
Inserting this estimate into \eqref{eq:toremovecongruences} and keeping in mind that $\#\V_q' \le (\log{x})^{KJ}$ (trivially), 
we conclude that
\begin{align}\notag N_{\con}(q)&= \sum_{\substack{m \le x \\ \gcd(f(m),q)=1}} \sideset{}{'}\sum_{P_1, \dots, P_J} 1\\ 
&= \sum_{\substack{m\le x \\ \gcd(f(m),q)=1}}\frac{\#\V_q'}{\phi(q)^{J}}\Bigg(\frac{1}{J!} \sum_{\substack{P_1,\dots, P_J\text{ distinct} \\ P_1\cdots P_J \le x/m \\ \text{each } P_j > L_m}} 1\Bigg) + O\left(x \exp\left(-\frac{1}{8} C_0 (\log{x})^{\delta/4}\right)
\right). \label{eq:hasanalogue} 
\end{align}

An entirely analogous argument yields the same estimate with $N_{\con}(q)$ replaced by $N_{\con}(q,a)$ and $\V_q'$ replaced by $\V_{q, a, m}''$. Comparing \eqref{eq:hasanalogue} with its $N_{\con}(q,a)$ analogue and rewriting \[ \frac{\#\V_{q, a, m}''}{\phi(q)^J} =  \frac{\#\V_{q, a, m}''}{\#\V_q'} \cdot \frac{\#\V_q'}{\phi(q)^J},\]  we are motivated to introduce the following hypothesis.

\begin{hypa} $\frac{\#\V_{q, a, m}''}{\#\V_q'} \sim \frac{1}{\phi(q)}$, as $x\to\infty$, uniformly in $q$ and $a$ and uniformly in $m\le x$ with $\gcd(f(m),q)=1$. 
\end{hypa}
We will soon see how to verify Hypothesis A in the situations described in Theorems \ref{thm:linear}, \ref{thm:generalF}, and \ref{thm:severallarge}. The phrase ``uniformly in $q$ and $a$'' in Hypothesis A should be read as ``uniformly in $q$ and $a$ subject to the restrictions of these theorem statements''. 

If Hypothesis A holds, we may deduce (keeping in mind Lemma \ref{lem:nqvsnqcon}, and that $x\exp(-\frac{1}{8}C_0 (\log{x})^{\delta/4}) = o(N(q)/\phi(q))$)  
\begin{align*} N_{\con}(q,a)
&= \sum_{\substack{m \le x \\ \gcd(f(m),q)=1}} \sideset{}{''}\sum_{P_1, \dots, P_J} 1\\ 
&= (1+o(1)) \frac{1}{\phi(q)} N_{\con}(q) + o\left( \frac{N(q)}{\phi(q)}\right) 
= (1+o(1)) \frac{1}{\phi(q)} N(q).\end{align*}
Since $N(q,a) = N_{\con}(q,a) + N_{\inc}(q,a)$,  weak uniform distribution mod $q$ will follow if the contribution from $N_{\inc}(q,a)$ is shown to be negligible. We record this condition as our next Hypothesis. 

\begin{hypb} $N_{{\rm inc}}(q,a) = o(N(q)/\phi(q))$, as $x\to\infty$, uniformly in $q$ and $a$. 
\end{hypb}

\section{Linearly defined functions: Proof of Theorem \ref{thm:linear}}\label{sec:linear}
We proceed to verify Hypotheses A and B.

\begin{proof}[Verification of Hypothesis A] Let $m\le x$ with $\gcd(f(m),q)=1$, and let $w \in \Z$ be a value of $af(m)^{-1}$ modulo $q$. We will estimate $\#\V_{q, a, m}''$ via the product formula $\#\V_{q, a, m}'' = \prod_{\ell^{e} \parallel q} V''_{\ell^e}$, where
\[ V''_{\ell^e} := \#\{(v_1, \dots, v_{J}) \bmod{\ell^e}: \gcd(v_1 \dots v_J,\ell)=1,~ \prod_{i=1}^{J}(Rv_i+S)\equiv w\pmod{\ell^e}\}. \] 
By assumption, $(\ell,6R)=1$ for all $\ell\mid q$. 

Suppose first that $\ell\mid S$.
Then the condition $\gcd(v_1 \dots v_J,\ell)=1$ is implied by $\prod_{i=1}^{J}(Rv_i+S)\equiv w\pmod{\ell^e}$. Noting that the map $v \mapsto Rv+S$ is a permutation of $\Z/\ell^e\Z$, we see that $V''_{\ell^e} = \phi(\ell^e)^{J-1}$ and
\begin{equation}\label{eq:whenpmidT} \phi(\ell^e) V''_{\ell^e} = \phi(\ell^e)^{J}.\end{equation}

When $\ell\nmid S$, we must work somewhat harder. By inclusion-exclusion, \begin{equation}\label{eq:incex} V''_{\ell^e} = \sum_{j=0}^{J} (-1)^{j} \binom{J}{j}  V''_{\ell^e,j},\end{equation} where \[ V''_{\ell^e,j} = \#\{(v_1,\dots,v_J)\bmod{\ell^e}: \ell\mid v_1, v_2, \dots, v_j,~\prod_{i=1}^{J}(Rv_i+S) \equiv w\pmod{\ell^e}\}. \] 
If $0 \le j < J$, then $V''_{\ell^e,j} =(\ell^{e-1})^{j} \phi(\ell^e)^{J-j-1}$: Each of $v_1,\dots,v_j$ can be chosen arbitrarily from the $\ell^{e-1}$ classes divisible by $\ell$, 
while  $v_{j+1},\dots,v_{J-1}$ can be chosen arbitrarily subject to each of $Rv_i+S$ (for $i=j+1,\dots,J-1$) being a unit mod $\ell^e$; this then determines $v_J$. Similarly, $V''_{\ell^e,J} = O((\ell^{e-1})^{J-1})$. Referring back to \eqref{eq:incex}, \begin{align}\notag \phi(\ell^e) V''_{\ell^e} &= (\phi(\ell^e)-\ell^{e-1})^{J} + O(\ell^e (\ell^{e-1})^{J-1}) \\
&=(\ell^{e} (1-2/\ell))^J\left(1+O(\ell(\ell-2)^{-J})\right). \label{eq:whenpnmidT}\end{align}
By \eqref{eq:whenpmidT} and \eqref{eq:whenpnmidT}, in either case for $\ell$ we have
\[ \phi(\ell^e) V''_{\ell^e} = \left(\phi(\ell^e) 
\left(1-\frac{\nu(\ell)}{\ell-1}\right)\right)^{J} \cdot \left(1+O(\ell(\ell-2)^{-J})\right). \]
Multiplying over $\ell$, 
\[ \phi(q) \#\V_{q, a, m}'' = (\phi(q) \alpha(q))^{J} \prod_{\ell^e \parallel q} \left(1+O(\ell(\ell-2)^{-J})\right) = \#\V_q' \prod_{\ell^e \parallel q} \left(1+O(\ell(\ell-2)^{-J})\right). \] So to verify Hypothesis A, it is enough to show that the final product is $1+o(1)$. This follows if $\sum_{\ell^e\parallel q} \ell(\ell-2)^{-J} = o(1)$, which is straightforward to prove: Since $q$ is coprime to $6$, we have for all large $x$ that
\[ \sum_{\ell^e\parallel q} \ell(\ell-2)^{-J} < \sum_{\ell \ge 5} \ell(\ell-2)^{-J} \le 3^{-J/2} \sum_{\ell \ge 5} \ell(\ell-2)^{-J/2} \le 3^{-J/2} \sum_{\ell \ge 5} \ell (\ell-2)^{-3} \ll 3^{-J/2}. \qedhere \]
\end{proof}

\begin{rmk}
It is also possible to estimate $V''_{\ell^e}$ via character sums, which will be our primary tool for general $F(T) \in \Z[T]$. By orthogonality (as in \eqref{eq:orthogonalityapplication0} below), $\phi(\ell^e) V''_{\ell^e} = \sum_{\chi \bmod{\ell^e}} \bar\chi(w) Z_\chi^J$, where 
\begin{align*} Z_\chi :&= \sum_{v \bmod \ell^e} \chi_0(v) \chi(Rv+S) \\ &= \sum_{u \bmod{\ell^e}} \chi(u) - \sum_{\substack{u \bmod{\ell^e}\\u \equiv S \bmod \ell}} \chi(u); \end{align*} here we have used that as $v$ runs over coprime residues mod $\ell^e$, the expression $Rv+S$ runs over all the residues mod $\ell^e$ except for those congruent to $S$ mod $\ell$. If $\ell\mid S$, it is then immediate that $Z_\chi = \1_{\chi = \chi_0} \phi(\ell^e)$ (with $\chi_0$ denoting the principal character mod $\ell^e$), once again giving us $\phi(\ell^e) V''_{\ell^e} = \phi(\ell^e)^J$. On the other hand, if $\ell \nmid S$, then fixing a generator $g$ mod $\ell^e$ and considering the unique $r \in \{0, 1, \dots, \phi(\ell^e)-1\}$ satisfying $g^r \equiv S \pmod{\ell^e}$, we observe that the sets $\{u \bmod{\ell^e}: u \equiv S \bmod \ell\}$ and $\{g^{r+(\ell-1)k} \bmod{\ell^e}: 0 \leq k<\ell^{e-1}\}$ are equal. Hence, \[ \sum_{\substack{u \bmod{\ell^e}\\u \equiv S \bmod \ell}} \chi(u) = \1_{\chi^{\ell-1} = \chi_0} \chi(S) \ell^{e-1}.\] As such, $Z_\chi =
\1_{\chi=\chi_0} \ell^{e-1}(\ell-2) +O(\1_{\chi^{\ell-1}=\chi_0,~\chi\ne \chi_0} \ell^{e-1})$, which again leads to (\ref{eq:whenpnmidT}) since there are $\ell-2$ nontrivial characters $\chi$ mod $\ell^e$ satisfying $\chi^{\ell-1} = \chi_0$.
\end{rmk}

\begin{proof}[Verification of Hypothesis B] We proceed as in the proof of Lemma \ref{lem:nqvsnqcon}. Let $n\le x$ be an inconvenient solution to $f(n)\equiv a\pmod{q}$. We can assume $P(n) > z= x^{1/\log_2 x}$, since the number of exceptional $n\le x$ is  $o(N(q)/\phi(q))$. Similarly, we can assume that $n$ has no repeated prime factors exceeding $y=\exp((\log{x})^{\delta/2})$. 
Write $n=PAB$, where $P:=P(n)$ and $A$ is the largest divisor of $n/P$ supported on primes exceeding $y$. Then $z < P\le x/AB$ and $(RP+S)f(AB)\equiv a\pmod{q}$. Given $A$ and $B$, this congruence is satisfied for $P$ belonging to at most one coprime residue class mod $q$. So by the Brun--Titchmarsh inequality, given $A$ and $B$ there are $\ll x/\phi(q) AB \log{(z/q)} \ll x\log_2{x}/\phi(q) AB \log x$ 
corresponding values of $P$. Note that we have saved a factor of $\phi(q)$ here over the analogous estimate in Lemma \ref{lem:nqvsnqcon}. Summing on $A,B$, and making the same estimates as in the argument for Lemma \ref{lem:nqvsnqcon}, yields \[ N_{\inc}(q,a) \le \frac{x}{\phi(q) (\log{x})^{1-\alpha\delta/2}} \exp(O((\log_3{x})^2 + (\log_2 q)^{O(1)})), \] 
and this is $o(N(q)/\phi(q))$.
\end{proof}

\section{General polynomially defined functions: Proof of Theorem \ref{thm:generalF}}\label{sec:general}

To check Hypothesis A in the context of Theorem \ref{thm:generalF}, we require the following character sum estimate, which follows from the Weil bounds when $e=1$ and from work of Cochrane \cite{cochrane02} (see also \cite{cochrane03}) when $e>1$. See \cite[Proposition 2.6]{PSR22} for a detailed discussion. 

\begin{lem}\label{lem:combinedcharsum} Let $F_1(T)$, \dots, $F_K(T) \in \Z[T]$ be nonconstant polynomials for which the product $F_1(T) \cdots F_K(T)$ has no multiple roots. Let $\ell$ be an odd prime not dividing the leading coefficient of any of the $F_k(T)$ and not dividing the discriminant of $F_1(T) \cdots F_K(T)$. Let $e$ be a positive integer, and let $\chi_1,\dots,\chi_K$ be Dirichlet characters modulo $\ell^e$, at least one of which is primitive. Then \begin{equation*} \left|\sum_{x\bmod{\ell^e}} \chi_1(F_1(x)) \cdots \chi_K(F_K(x))\right| \le (d-1) \ell^{e(1-1/d)},\end{equation*}
where $d = \sum_{k=1}^{K} \deg{F_k(T)}$.
\end{lem}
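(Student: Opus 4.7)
The plan is to treat the cases $e=1$ and $e\ge 2$ separately, in each case reducing the multi-character sum to a classical one-variable character sum estimate (Weil or Cochrane).

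The first preparatory move in either case is to consolidate all the characters into a single one. Since $\ell$ is odd, $(\Z/\ell^e\Z)^*$ is cyclic and so is its character group. Fix a generator $\chi_*$ of the character group, write each $\chi_k = \chi_*^{a_k}$ with $0 \le a_k < \phi(\ell^e)$, and set $G(x) := \prod_{k=1}^K F_k(x)^{a_k}$. Then
\[ \sum_{x \bmod \ell^e} \chi_1(F_1(x)) \cdots \chi_K(F_K(x)) = \sum_{x \bmod \ell^e} \chi_*(G(x)). \]
Note that the primitivity of some $\chi_k$ forces $\chi_*^{a_k}$ to have the full order $\phi(\ell^e)$ for that $k$.

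For $e=1$, I would invoke Weil's bound for multiplicative character sums: if $\chi_*$ has order $n$ and $G$ is not a constant multiple of an $n$th power in $\overline{\F_\ell}[x]$, then $|\sum_x \chi_*(G(x))| \le (m-1)\sqrt{\ell}$, where $m$ is the number of distinct roots of $G$ over $\overline{\F_\ell}$. The hypothesis on the discriminant guarantees that $F_1, \dots, F_K$ remain pairwise coprime and separable mod $\ell$. If $G = c H^n$ held in $\overline{\F_\ell}[x]$, comparing the two factorizations into distinct linear factors would force $n \mid a_k$ for every index $k$ with $a_k > 0$, making each $\chi_k$ trivial and contradicting the primitivity of some $\chi_k$. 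Thus Weil applies, and since the distinct roots of $G$ form a subset of the roots of $F_1 \cdots F_K$ we have $m \le d$. When $d \ge 2$, the inequality $\sqrt{\ell} \le \ell^{1-1/d}$ closes the case; when $d = 1$ we have $K = 1$ with $\deg F_1 = 1$, and the sum vanishes by primitivity.

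For $e \ge 2$, I would invoke the Cochrane--Zheng prime-power bounds. These give estimates for $\sum_{x \bmod \ell^e} \chi(G(x))$ of shape $\ell^{e(1-1/d_*)}$, where $d_*$ is read off from the reduction of $G$ and of $G'$ modulo $\ell$ rather than from $\deg G$ itself. The pairwise coprimality and separability of the $F_k \bmod \ell$ (again from the discriminant hypothesis), together with the identification of the distinct roots of $G \bmod \ell$ with those of $F_1 \cdots F_K \bmod \ell$, should let one identify $d_*$ with $d$ up to the constant $(d-1)$ in front.

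The main obstacle I foresee is that $\deg G = \sum_k a_k \deg F_k$ can be as large as $\phi(\ell^e) \cdot d$, so bounds that depend on $\deg G$ are useless. The whole point is that Weil's bound, and in the $e \ge 2$ case the Cochrane bound, are sensitive only to the \emph{distinct} roots of $G$ and the local multiplicity structure at those roots, both of which are governed by the original $F_k$ via the squarefree-discriminant hypothesis. Making this reduction rigorous, rather than the exponential-sum estimates themselves, is the real content; the careful bookkeeping is carried out in \cite[Proposition 2.6]{PSR22}.
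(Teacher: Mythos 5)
Your approach matches the paper's, which dispatches this lemma by citation to the Weil bound for $e=1$, to Cochrane's prime-power character sum bounds for $e>1$, and to Proposition 2.6 of \cite{PSR22} for the detailed assembly; your outline is essentially a sketch of that assembly. One small repair is needed in the consolidation step: if some $\chi_k$ is principal then your choice gives $a_k=0$, so $\chi_*(F_k(x)^{a_k})=\chi_*(1)=1$ even when $\ell\mid F_k(x)$, whereas $\chi_k(F_k(x))=0$; taking $a_k\in\{1,\dots,\phi(\ell^e)\}$ (so $a_k=\phi(\ell^e)$ when $\chi_k=\chi_0$) instead of $\{0,\dots,\phi(\ell^e)-1\}$ fixes this, after which $\chi_*(G(x))$ vanishes exactly when some $F_k(x)\equiv 0\pmod{\ell}$ and $G$ has precisely the $d$ distinct roots of $F_1\cdots F_K$ over $\overline{\F}_\ell$.
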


Let $\Delta(F)$ denote the discriminant of $F(T)$ if $F(0)=0$ and the discriminant of $T F(T)$ if $F(0)\ne 0$. Throughout this section and the next, we assume that $C(F)$ is fixed so large that primes exceeding $C(F)$ are odd and divide neither the leading coefficient of $F$ nor $\Delta(F)$. We also assume that $C(F) > (4D)^{2D+2}$ where $D=\deg{F(T)}$.

\begin{proof}[Verification of Hypothesis A] 

Suppose that $m\le x$ has $\gcd(f(m),q)=1$ and write $w$ for a value of $af(m)^{-1}$ mod $q$. 
Then $\#\V_{q, a, m}'' = \prod_{\ell^{e} \parallel q} V_{\ell^e}''$ and $\#\V_q' = \prod_{\ell^{e} \parallel q} V'_{\ell^e}$, where
\[ V''_{\ell^e} := \#\{(v_1,\dots,v_J)\bmod{\ell^e}: \gcd(v_1 \dots v_J,\ell)=1,~ \prod_{i=1}^{J} F(v_i) \equiv w\pmod{\ell^e}\} \] 
and
\[ V'_{\ell^e} := \#\{(v_1,\dots,v_J)\bmod{\ell^e}: \gcd(v_1 \dots v_J F(v_1)\cdots F(v_J),\ell)=1\}. \]
With $\chi_0$ denoting the principal Dirichlet character mod $\ell^e$,
\begin{align} \phi(\ell^e) V''_{\ell^e} &= \sum_{\chi \bmod{\ell^e}} \bar{\chi}(w) \sum_{v_1,\dots, v_J\bmod{\ell^e}}  \chi_0(v_1\cdots v_J)\chi(F(v_1) \cdots F(v_J)) \label{eq:orthogonalityapplication0}\\
&= V_{\ell^e}' + \sum_{\substack{\chi\bmod \ell^e \\ \chi \ne \chi_0}} \bar{\chi}(w) Z_{\chi}^{J}, \label{eq:orthogonalityapplication}\end{align}
where $Z_{\chi}:= \sum_{v \bmod{\ell^e}} \chi_0(v) \chi(F(v))$.
For each $\chi$ of conductor $\ell^{e_0}$ with $1\le e_0 \le e$, Lemma \ref{lem:combinedcharsum} gives $|Z_{\chi}| = \ell^{e-e_0} |\sum_{x\bmod{\ell^{e_0}}} \chi_0(x) \chi(F(x))| \le D \ell^{(e-e_0) + e_0 (1-1/(D+1))} = D \ell^{e-e_0/(D+1)}$. (If $\ell$ divides $F(0)$, then $\sum_{x\bmod{\ell^{e_0}}} \chi_0(x) \chi(F(x))= \sum_{x\bmod{\ell^{e_0}}}\chi(F(x))$, and we apply Lemma \ref{lem:combinedcharsum} with $k=1$ and $F_1(T)=F(T)$; 
otherwise we take $k=2$, $F_1(T)=T$, \and $F_2(T) = F(T)$.) As there are fewer than $\ell^{e_0}$ characters of conductor $\ell^{e_0}$, 
\[ \bigg|\sum_{\substack{\chi\bmod \ell^e \\ \chi \ne \chi_0}} \bar{\chi}(w) Z_{\chi}^{J}\bigg| \le \sum_{1\le e_0 \le e} \ell^{e_0} (D \ell^{e-e_0/(D+1)})^{J} = D^{J} \ell^{eJ} \sum_{1\le e_0 \le e} \ell^{e_0(1-J/(D+1))}.  \]
Since $J\ge D+2$ once $x$ is sufficiently large, each term in the sum $\sum_{1\le e_0 \le e} \ell^{e_0(1-J/(D+1))}$ 
is smaller than half the previous, and $\sum_{1\le e_0 \le e} \ell^{e_0(1-J/(D+1))} \le 2\ell^{1-J/(D+1)}$. Thus, $|\sum_{\substack{\chi\bmod \ell^e \\ \chi \ne \chi_0}} \bar{\chi}(w) Z_{\chi}^{J}| \le 2D^{J} \ell^{eJ} \ell^{1-J/(D+1)}$. Since $V_{\ell^e}' = (\phi(\ell^e)\alpha(\ell^e))^J$, we conclude from \eqref{eq:orthogonalityapplication} that
\begin{equation}\label{eq:generalVellrelation} \phi(\ell^e) V_{\ell^e}'' = V_{\ell^e}'(1 + R_\ell), \end{equation}
where
\[ |R_\ell| \le 2D^{J} \left(\frac{\ell^e}{\phi(\ell^e)} \alpha(\ell^e)^{-1}\right)^{J} \ell^{1-J/(D+1)} \le 2 (4D)^J \ell^{1-J/(D+1)} . \]
(We  use here that $\ell^e/\phi(\ell^e), \alpha(\ell^e)^{-1} \le 2$.) 
Multiplying over $\ell$ in \eqref{eq:generalVellrelation}, we see that Hypothesis A will follow if $(4D)^{J} \sum_{\ell \mid q} \ell^{1-J/(D+1)} = o(1)$. To check this last inequality, observe that when $x$ is large,
\begin{align*} (4D)^{J} \sum_{\ell \mid q} \ell^{1-J/(D+1)} &\le (4D)^{J} C(F)^{-J/(2D+2)} \sum_{\ell\mid q} \ell^{1-J/(2D+2)} \\ &\le (4D/C(F)^{1/(2D+2)})^{J} \sum_{\ell} \ell^{-2} < 2 (4D/C(F)^{1/(2D+2)})^{J};
\end{align*}
this last quantity tends to $0$ since $C(F) > (4D)^{2D+2}$ and $J\to\infty$.
\end{proof}

\begin{proof}[Verification of Hypothesis B] We follow the arguments for the corresponding step in \S\ref{sec:linear}.
Let $\xi(q)$ be the maximum number of roots $v$ mod $q$ of any congruence $F(v)\equiv a\pmod{q}$, where the maximum is over all residue classes $a\bmod{q}$. Then there are at most $\xi(q)$ possibilities for the residue class of $P$ modulo $q$ and our previous arguments yield
\begin{align*} N_{\inc}(q,a) &\le \xi(q) \frac{x}{\phi(q) (\log{x})^{1-\alpha\delta/2}} \exp(O((\log_3{x})^2 + (\log_2 q)^{O(1)})) \\ 
&< \xi(q) \frac{x}{\phi(q) (\log{x})^{1-2\alpha\delta/3}}. 
\end{align*}
This last quantity is certainly $o(N(q)/\phi(q))$ as long as $\xi(q) \ll (\log{x})^{(1-\delta)\alpha}$ (say). 
By the choice of $C(F)$, we have $\xi(q) \le D^{\omega(q)}$ for squarefree $q$, verifying Hypothesis B for squarefree $q$ having $\omega(q)\le (1-\delta)\alpha\log_2{x}/\log{D}$. 
On the other hand, by a result of Konyagin, each congruence $F(v)\equiv a\pmod{q}$ has $O(q^{1-1/D})$ roots modulo $q$ \cite{konyagin79b,konyagin79a}. Consequently, Hypothesis B also holds true for $q\le (\log{x})^{\alpha(1-\delta)(1-1/D)^{-1}}$, completing the proof of Theorem \ref{thm:generalF}.
\end{proof}

\section{Equidistribution along inputs with several prime factors exceeding $q$: Proof of Theorem \ref{thm:severallarge}}\label{sec:severallarge}

\begin{proof}[Proof of {\rm (a)}]

Recall that for the purposes of Theorem \ref{thm:severallarge}, we take $\delta:=1$ and $y = \exp((\log{x})^{1/2})$ in the framework developed in section \ref{sec:framework}. Lemma \ref{lem:nqvsnqcon} still applies to show that $N(q) \sim N_{\text{con}}(q)$ as $x \rightarrow \infty$, uniformly in $q \le (\log x)^K$ having $\alpha(q) \ne 0$. In particular, if $P_{D+2}(n) \le q$, then $P_{J}(n) < q \le y$ (once $x$ is large); thus $n$ is inconvenient, 
placing it in a set of size $o(N(q))$. It follows that the right-hand side of \eqref{eq:severallargeeq} is $\sim N(q)/\phi(q)$, and our task is that of showing the same for the left-hand side. The proof of Hypothesis A in \S\ref{sec:general} gives $N_{\con}(q,a) \sim N(q)/\phi(q)$. It remains only to show that there are $o(N(q)/\phi(q))$ inconvenient $n$ with $P_{D+2}(n) > q$ and $f(n)\equiv a\pmod{q}$.

As usual, we can assume $P(n) > z:=x^{1/\log_2 x}$ and that $n$ has no repeated prime factor exceeding $y=\exp(\sqrt{\log{x}})$. Since $n$ is inconvenient, we must have $P_J(n) \le y$. 
We suppose first that one of the largest $D+2$ primes in $n$ is repeated. Write $n = PSm$, where $P=P(n)$, $S$ is the largest squarefull divisor of $n/P$; hence, $Sm \le x/z$ and $S > q^2$. Given $S$ and $m$, there are fewer than $\pi(x/Sm) \ll x\log_2{x}/Sm \log x$ possibilities for $P$. Summing on squarefull $S > q^2$ bounds the number of $n$, given $m$, as  $\ll x\log_2{x}/qm \log x$. To handle the sum on $m$, write $m=AB$, where $A$ is the largest divisor of $m$ composed of primes exceeding $y$. Then $\Omega(A) < J$, while $B$ is $y$-smooth with $\gcd(f(B),q)=1$. Bounding $\sum 1/A$ and $\sum 1/B$ as in the proof of Lemma \ref{lem:nqvsnqcon}, we deduce that $\sum 1/m \le (\log x)^{\frac{1}{2}\alpha} \exp((\log_3 x)^{O(1)})$. 
Putting it all together, we see that the number of $n$ in this case is at most $\frac{x}{q (\log{x})^{1-\frac{1}{2}\alpha}} \exp((\log_3 x)^{O(1)})$, which is $o(N(q)/\phi(q))$.

We now suppose that each $P_i:=P_i(n)$ appears to the first power in $n$, for $i=1,2,\dots,D+2$, and we write $n = P_1 \cdots P_{D+2} m$. Since $f(n)\equiv a\pmod{q}$, it must be that $\gcd(f(m),q)=1$. Furthermore, letting $w$ denote a value of $af(m)^{-1}$ mod $q$, 
\[ (P_1,\dots, P_{D+2}) \bmod q \in \Vw, \] 
where
\[ \Vw := \{(v_1,\dots,v_{D+2})\bmod{q}: \gcd(v_1\cdots v_{D+2},q)=1,~ F(v_1)\cdots F(v_{D+2}) \equiv w \pmod{q}\}. \]

Let us estimate the size of $\#\Vw$. Put
\[ V_{\ell^e} = \#\{(v_1,\dots, v_{D+2})\bmod{\ell^e}: \gcd(v_1 \cdots v_{D+2},\ell)=1,~F(v_1)\cdots F(v_{D+2}) \equiv w \pmod{\ell^e}\},
\]
so that $\#\Vw = \prod_{\ell^e \parallel q} V_{\ell^e}$.
From the proof of \eqref{eq:generalVellrelation}, with $J$ replaced by $D+2$,
\[ \phi(\ell^e) V_{\ell^e} = (\alpha(\ell^e) \phi(\ell^e))^{D+2} (1+R_{\ell}), \]
where $|R_{\ell}| \le 2(4D)^{D+2} \ell^{-1/(D+1)} \ll \ell^{-1/(D+1)}$.
Multiplying on $\ell$ gives
\begin{align} \phi(q) \#\Vw &\ll \alpha(q)^{D+2} \phi(q)^{D+2} \exp\bigg(O\big(\sum_{\ell\mid q} \ell^{-1/(D+1)}\big)\bigg)\notag \\
&\ll \phi(q)^{D+2} \exp(O((\log{q})^{1-1/(D+1)})).\label{eq:sizevestimate} \end{align}

Given $P_2,\dots, P_{D+2}$, $m$, and $\v = (v_1,\dots,v_{D+2})\bmod{q} \in \Vw$, the number of possibilities for $P_1$ is $\ll x\log_2 x/\phi(q)mP_2\cdots P_{D+2}\log{x}$, 
by Brun--Titchmarsh. Summing on $P_2,\dots,P_{D+2}$, we see that the number of possibilities for $n$ given $\v$ and $m$ is $\ll x(\log_2{x})^{O(1)}/\phi(q)^{D+2}m\log{x}$. (We use here
that
\[ \sum_{\substack{q < p \le x \\ p\equiv v\pmod{q}}}\frac{1}{p} \ll \frac{\log_2{x}}{\phi(q)}, \]
uniformly in the choice of $v$, which follows from Brun--Titchmarsh and partial summation; alternatively, one can apply Lemma \ref{lem:NP}.) We sum on $\v \in \Vw$, using \eqref{eq:sizevestimate}, and then sum on $m$, writing $m=AB$ and making the estimates as earlier in this proof. We find that the total number of $n$ is at most 
\[ \frac{x}{ \phi(q) (\log{x})^{1-\frac{1}{2}\alpha}} \exp(O((\log_2{x})^{1-1/(D+1)})), \]
which is $o(N(q)/\phi(q))$.\end{proof}

\begin{proof}[Proof of \rm{(b)}] We follow the proof of (a), replacing $D+2$  everywhere by $2$. It suffices to show that
\begin{equation}\label{eq:2wanted} \phi(\ell) V_{\ell} \le \phi(\ell)^2 (1+O(1/\sqrt{\ell})) \end{equation}
for each $\ell$, for then $\phi(q)\#\Vw \ll \phi(q)^2 \exp(O((\log{q})^{1/2}))$, which is a suitable analogue of \eqref{eq:sizevestimate}.

Certainly $V_{\ell}$ is bounded by the count of $\F_{\ell}$-points on the affine curve $F(x)F(y)=w$. 

The polynomial $F(x)F(y)-w$ is absolutely irreducible over $\F_{\ell}$.\footnote{The published version of the paper contained an incorrect argument for this claim.} Indeed, suppose that $F(x)F(y)-w = U(x,y) V(x,y)$ for some $U(x,y), V(x,y) \in \overline{\F}_{\ell}[x,y]$. Then for each root $\theta \in \overline{\F}_{\ell}$ of $F$, we find that $-w = U(\theta,y) V(\theta,y)$, and so in particular $U(\theta,y)$ is constant. Thus, if we write
\[ U(x,y) = \sum_{k\ge 0} a_k(x)y^k, \] with each $a_k(x) \in \overline{\F}_{\ell}[x]$, then $a_k(\theta)=0$ for each $k>0$. Since $F$ has no multiple roots over $\overline{\F}_{\ell}$, each such $a_k(x)$ is forced to be a multiple of $F(x)$, hence $U(x, y) \equiv a_0(x)\pmod{F(x)}$. A symmetric argument shows that $V(x, y) \equiv b_0(y) \pmod{F(y)}$ for some $b_0(y) \in \overline{\F}_{\ell}[y]$, so that $V(x, \theta) = b_0(\theta)$. Consequently, for any root $\theta \in \overline{\F}_{\ell}$ of $F$, 
\[-w \equiv F(x)F(\theta) - w \equiv U(x, \theta) V(x, \theta) \equiv a_0(x)b_0(\theta) \pmod{F(x)},\]
which shows that $U(x, y) \equiv a_0(x) \equiv c \pmod{F(x)}$ for some constant $c \in \overline{\F}_{\ell}$. But this forces $c = U(\theta, \theta)$, showing that $F(x)$ divides $U(x, y) - U(\theta, \theta)$. By symmetry, so does $F(y)$, and we obtain $U(x, y) = U(\theta, \theta) + F(x)F(y) Q(x, y)$ for some $Q(x,y) \in \overline{\F}_{\ell}[x,y]$.
Degree considerations now imply that for $U(x,y)$ to divide $F(x)F(y)-w$, either $Q(x,y)$ is a nonzero constant, in which case $V(x,y)$ is constant, or $Q(x,y)=0$, in which case $U(x,y)$ is constant.  

Now we apply the version of the Hasse--Weil bound appearing as \cite[Corollary 2(b)]{LY94}; this gives that the number of $\F_{\ell}$-points on $F(x)F(y)=w$ 
is at most $\ell + 1 +\frac{1}{2}(2D-1)(2D-2)\lfloor 2\sqrt{\ell}\rfloor$,
which is $\phi(\ell) (1+O(1/\sqrt{\ell}))$, yielding \eqref{eq:2wanted}. 
\end{proof}

\section{Concluding remarks and further questions}

Elementary methods often enjoy a robustness surpassing their analytic counterparts, and our (quasi)elementary approach to weak uniform distribution is no exception. Not only does our method yield a range of uniformity in $q$ wider than that (seemingly) accessible to more `obvious' attacks via mean value theorems for multiplicative functions, but the method applies to functions that do not fit conveniently into the `multiplicative managerie'. We illustrate with the following theorem; note that the distribution in residue classes of the function $A^\ast(n)$ below does not seem easily approached via mean value theorems.

\begin{thm}\label{thm:PDSumAndAlt}
Fix $K \geq 1$. The sum of prime divisors function $A(n) := \sum_{j=1}^{\Omega(n)} P_j(n)$, as well as the alternating sum of prime divisors function $A^*(n) := \sum_{j=1}^{\Omega(n)} (-1)^{j-1} P_j(n)$, is asymptotically uniformly distributed to all moduli $q \le (\log x)^K$. In other words, as $x \rightarrow \infty$, 
\begin{equation}\label{eq:aastarUD}
\sum_{\substack{n \leq x\\A(n) \equiv a \pmod q}} 1 ~\sim~ \sum_{\substack{n \leq x\\A^*(n) \equiv a \pmod q}} 1  \hspace{3mm} ~\sim~ \frac xq,
\end{equation}
uniformly in moduli $q \le (\log x)^K$ and residue classes $a \bmod q$.
\end{thm}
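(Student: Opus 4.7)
The plan is to prove uniform equidistribution of $A$ and $A^*$ modulo $q$ via additive character orthogonality on $\Z/q\Z$. Writing $e_q(t) := e^{2\pi i t/q}$, it suffices to establish the exponential sum bound
\begin{equation*}
    \sum_{k=1}^{q-1} |S_k(f)| = o(x), \qquad \text{where } S_k(f) := \sum_{n \le x} e_q(k f(n)),
\end{equation*}
uniformly in $q \le (\log x)^K$, for $f \in \{A, A^*\}$. I would adapt the framework of Section \ref{sec:framework} with $y := \exp((\log x)^{1/2})$ and $J = J(x) \to \infty$ (slowly), restricting attention to convenient $n = m P_J P_{J-1} \cdots P_1$. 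For such $n$, both functions admit the decomposition
\begin{equation*}
    A(n) = A(m) + \sum_{j=1}^J P_j, \qquad A^*(n) = (-1)^J A^*(m) + \sum_{j=1}^J (-1)^{j-1} P_j,
\end{equation*}
so that $e_q(kf(n))$ factors as an $m$-dependent factor times $\prod_{j=1}^J e_q(k \epsilon_j P_j)$ with $\epsilon_j \in \{\pm 1\}$. Running the Siegel--Walfisz step of Section \ref{sec:framework} to strip congruence conditions on each $P_j$, the inner tuple sum acquires a factor of $c_q(k)^J/\phi(q)^J$, where $c_q$ denotes the Ramanujan sum (the identity $c_q(-k) = c_q(k)$ absorbs the signs $\epsilon_j$).

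For ``generic'' $k$, namely those with $e := q/\gcd(k, q) \ge 3$, we have $|c_q(k)|/\phi(q) = 1/\phi(e) \le 1/2$ (or $c_q(k) = 0$ if $e$ is not squarefree). Summing the convenient contribution over such $k$ via the identity
\begin{equation*}
    \sum_{\substack{e \mid q,~e \ge 3 \\ e\text{ squarefree}}} \phi(e)^{1-J} = O(2^{-J})
\end{equation*}
gives $\sum_{k:\,e \ge 3} |S_k^{\con}(f)| = o(x)$. The sole exceptional $k$ is $k = q/2$, which requires $q$ to be even; here $|c_q(k)| = \phi(q)$ and the above decay fails. Nevertheless, $e_q((q/2) t) = (-1)^t$, so $e_q((q/2) A(n)) = (-1)^{A(n)}$, and likewise $e_q((q/2) A^*(n)) = (-1)^{A^*(n)} = (-1)^{A(n)}$ (using that $(-1)^{j-1} P_j \equiv P_j \pmod 2$, which forces $A^*(n) \equiv A(n) \pmod 2$). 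Writing $n = 2^a m$ with $m$ odd yields $A(n) \equiv \Omega(m) \pmod 2$, so
\begin{equation*}
    S_{q/2}(f) = \sum_{a \ge 0}\, \sum_{\substack{m \le x/2^a\\m\text{ odd}}} \lambda(m) = o(x)
\end{equation*}
by the prime number theorem for the Liouville function restricted to odd integers (with savings $\exp(-c\sqrt{\log x})$, trivially uniform in $q$).

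The hardest step will be bounding the inconvenient contribution $\sum_{k \ne 0} |S_k^{\inc}(f)| = o(x)$ with sufficient precision to cover the full range $q \le (\log x)^K$ for arbitrary $K$, since the naive bound $\sum_{k \ne 0} |S_k^{\inc}(f)| \le q \cdot N_{\inc}(q)$ suffices only when $K < 1$. To extend to all $K$, I would decompose each inconvenient $n$ as $n = m L$, with $m$ the $y$-smooth part and $L$ the $y$-rough part (so $\Omega(L) < J$), and then invoke a quantitative Selberg--Delange-type estimate $\sum_{m \le T,\,P(m) \le y} e_q(k A(m)) \ll \Psi(T, y)(\log y)^{\Re c_q(k)/\phi(q) - 1}$ on the smooth part. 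Combining the resulting savings factor with a Ramanujan-sum averaging across $k$ and standard bounds on $y$-rough integers with few prime factors should yield the required $o(x)$ estimate uniformly in $q \le (\log x)^K$.
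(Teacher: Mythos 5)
Your handling of the convenient contribution is sound and is essentially the Fourier-dual reformulation of what the paper does: where you strip congruence conditions on the $P_j$ and collect Ramanujan-sum factors $c_q(k)^J/\phi(q)^J$ (noting $c_q(-k)=c_q(k)$ to absorb the alternating signs, which is correct), the paper computes $\#\V_q(w)$ directly via the formula $V_{\ell^e}=\phi(\ell^e)^J/\ell^e+\ell^{-e}\sum_{0<r<\ell^e}e(-rw/\ell^e)S_\ell(r)^J$ and the evaluation $S_\ell(r)=\1_{\ell^{e-1}\parallel r}(-\ell^{e-1})$. These are the same computation written on opposite sides of orthogonality. Your treatment of the exceptional frequency $k=q/2$ via the Liouville function is likewise correct and amounts to the paper's reduction to the known equidistribution of $A(n)\bmod 2$.

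The genuine gap is the inconvenient contribution, and it is exactly where your proposal becomes speculative. You correctly observe that the trivial bound $\sum_{k\ne 0}|S_k^{\inc}(f)|\le q\cdot N_{\inc}(q)$ fails for large $K$, but the Selberg--Delange-type estimate you invoke for $\sum_{m\le T,\,P(m)\le y}e_q(kA(m))$ is neither proved nor clearly available with the needed uniformity in $q$ and $k$, and the final ``combining ... should yield'' step is a sketch, not an argument. The paper sidesteps all of this: rather than trying to bound $\sum_k|S_k^{\inc}|$, it directly shows that the number of inconvenient $n\le x$ with $A(n)\equiv a\pmod q$ (respectively $A^*(n)\equiv a\pmod q$) is $o(x/q)$, uniformly in $a$, exactly as in the verification of Hypothesis B in \S\ref{sec:linear}. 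Concretely, one discards $n$ with $P(n)\le z$ or with a repeated prime factor exceeding $y$, then writes $n=PAB$ with $P=P(n)$; the congruence $A(n)\equiv a\pmod q$ (or its $A^*$ analogue) pins $P$ to a \emph{single} residue class mod $q$ once $A,B$ are fixed, so Brun--Titchmarsh gives $\ll x\log_2 x/(\phi(q)\,AB\log x)$ choices for $P$. Summing $\sum 1/A\le\exp(O((\log_3 x)^2))$ over the $y$-rough part (which has $\Omega(A)<J$) and $\sum 1/B\ll\log y=\sqrt{\log x}$ over the $y$-smooth part yields $O\bigl(x/(\phi(q)(\log x)^{1/2-o(1)})\bigr)=o(x/q)$. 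This saving of a full factor $\phi(q)$ over the unconditioned count is the key point your exponential-sum formulation obscures; once you replace the Selberg--Delange detour with this direct Brun--Titchmarsh bound on the per-class inconvenient count, your argument closes cleanly and matches the paper's.

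A minor point: in the identity $\sum_{e\mid q,\,e\ge 3,\,e\text{ sqfree}}\phi(e)^{1-J}=O(2^{-J})$ you should make explicit that $\prod_{\ell\mid q,\,\ell\ge 3}(1+(\ell-1)^{1-J})=1+O(2^{-J})$ for $J\ge 4$, which follows from $(\ell-1)^{1-J}\le 2^{4-J}(\ell-1)^{-3}$; this is needed to justify the claimed decay uniformly over all $q\le(\log x)^K$.
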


\begin{rmk} The uniform distribution of $A(n)$ mod $q$ for each fixed $q$ is a consequence of the theorem of Delange quoted in the introduction, with more precise results appearing in work of Goldfeld \cite{goldfeld17}. For varying $q$, the problem seems to have been first considered in \cite{PSR}; there Hal\'asz's mean value theorem is used to show uniform distribution of $A(n)$ mod $q$ for $q\le (\log{x})^{\frac12-\delta}$ (for any fixed $\delta>0$), a significantly narrower range than that allowed by Theorem \ref{thm:PDSumAndAlt}.\end{rmk}

\begin{proof}[Proof of Theorem \ref{thm:PDSumAndAlt}] 

With $y:= \exp(\sqrt{\log x})$, arguments analogous to (but simpler than) those in the proof of Lemma \ref{lem:nqvsnqcon} show that the number of inconvenient $n\le x$ is $o(x)$, while  arguments analogous to (but simpler than) those in the verification of Hypothesis B of \S\ref{sec:linear} show that the number of inconvenient $n \le x$ having $A(n) \equiv a \pmod q$ or $A^*(n) \equiv a \pmod q$ is $o(x/q)$. Hence, it suffices to show that
\begin{equation}\label{eq:convenientAAstar}
N(q, a) \sim N^*(q, a) \hspace{3mm} ~\sim~ \frac 1q \sum_{\text{convenient } n\le x} 1,
\end{equation}
where $N(q, a)$ (respectively, $N^*(q, a)$) denotes the number of convenient $n \le x$ having $A(n) \equiv a \pmod q$ (resp., $A^*(n) \equiv a \pmod q$). 

Proceeding as in  \S\ref{sec:framework}, we define, for an arbitrary residue class $w$ mod $q$,
\[\Vw := \{(v_1, \dots, v_J)\bmod{q}: \gcd(v_1 \dots v_J,q)=1,~\sum_{j=1}^J v_j \equiv w \pmod{q}\}\]
and 
\[\Vwst := \{(v_1, \dots, v_J)\bmod{q}: \gcd(v_1 \dots v_J,q)=1,~\sum_{j=1}^J (-1)^{j-1} v_j \equiv w\pmod{q}\}, \]
and we write
\begin{equation*}
N(q, a) = \sum_{m \le x} \frac{1}{J!} \sum_{\mathbf{v} \in \Vm}  \sum_{\substack{P_1,\dots, P_J\text{ distinct} \\ P_1\cdots P_J \le x/m \\ \text{each } P_j > L_m \\ \text{each } P_j \equiv v_j\pmod{q}}} 1, \qquad N^*(q, a) = \sum_{m \le x} \frac{1}{J!} \sum_{\mathbf{v} \in \VmSt}  \sum_{\substack{P_1,\dots, P_J\text{ distinct} \\ P_1\cdots P_J \le x/m \\ \text{each } P_j > L_m \\ \text{each } P_j \equiv v_j\pmod{q}}} 1,  
\end{equation*}
where $\Vm:= \V_q(a-A(m))$ and $\VmSt := \V_q^*(a - (-1)^J A^*(m))$.

By $J$ applications of Siegel-Walfisz, we now obtain  
\begin{align}
N(q, a) := \sum_{m\le x}\frac{\#\Vm}{\phi(q)^{J}}\Bigg(\frac{1}{J!} \sum_{\substack{P_1,\dots, P_J\text{ distinct} \\ P_1\cdots P_J \le x/m \\ \text{each } P_j > L_m}} 1\Bigg) + O\left(x \exp\left(-\frac{1}{8} C_K (\log{x})^{1/4}\right)
\right) \label{NSWExpr}\\
N^*(q, a) := \sum_{m\le x}\frac{\#\VmSt}{\phi(q)^{J}}\Bigg(\frac{1}{J!} \sum_{\substack{P_1,\dots, P_J\text{ distinct} \\ P_1\cdots P_J \le x/m \\ \text{each } P_j > L_m}} 1\Bigg) + O\left(x \exp\left(-\frac{1}{8} C_K (\log{x})^{1/4}\right)
\right), \label{N^*SWExpr}
\end{align}
for some constant $C_K>0$ depending only on $K$. As an analogue of our Hypothesis A, we claim that as $x \rightarrow \infty$, 
\begin{flalign}
\#\Vm \sim (\1_{2 \nmid q} + 2\cdot \1_{2\mid q,\,J \equiv a-A(m) \pmod 2}) \frac{\phi(q)^J}q, \label{VmNo}\\
\#\VmSt \sim (\1_{2 \nmid q} + 2\cdot \1_{2\mid q,\,J \equiv a-(-1)^J A^*(m) \pmod 2}) \frac{\phi(q)^J}q, \label{VmstNo}
\end{flalign} 
uniformly in $m \le x$ and in $q \le (\log x)^K$. (If
$\1_{2 \nmid q} + 2\cdot \1_{2\mid q,\,J \equiv a-A(m) \pmod 2}=0$, the asymptotic \eqref{VmNo} should be interpreted as the claim $\V_{q, a, m}$ is empty, and similarly for \eqref{VmstNo}.) To this end, it suffices to show that
\begin{equation}\label{VwVw*No}
\#\Vwst = \#\Vw \sim (\1_{2 \nmid q} + 2\cdot \1_{2\mid q,\,J \equiv w \pmod 2}) \frac{\phi(q)^J}q,
\end{equation}
uniformly in $q \le (\log x)^K$ and in residue classes $w$ mod $q$.
The equality in \eqref{VwVw*No} follows immediately from the one-to-one correspondence $(v_1, \cdots, v_J) \xleftrightarrow{} (v_1, -v_2, \cdots, (-1)^{J-1} v_J)$ between $\Vw$ and $\Vwst$.  To see the asymptotic, we write $\#\Vw = \prod_{\ell^e \parallel q} V_{\ell^e}$, where for each prime power $\ell^e \parallel q$, 
\begin{align*}
V_{\ell^e} :&= \#\{(v_1, \dots, v_J)\bmod{\ell^e}: \gcd(v_1 \dots v_J,\ell)=1,~\sum_{j=1}^J v_j \equiv w \pmod{\ell^e}\}\\
&=\frac{\phi(\ell^e)^J}{\ell^e} + \frac1{\ell^e} \sum_{0<r<\ell^e} \exp\left(-\frac{2 \pi i rw}{\ell^e}\right) S_\ell(r)^J, 
\end{align*}
with $S_\ell(r) := \sum_{\substack{v \bmod \ell^e,~(v, \ell)=1}} \exp(2 \pi i rv/\ell^e)$  (a Ramanujan sum). Since $S_\ell(r) = \1_{\ell^{e-1} \parallel r} (-\ell^{e-1})$ for all $r \in \{1, \cdots, \ell^e-1\}$ (see, for instance, \cite[Theorem 4.1, p.\ 110]{MV07}), we deduce that as $x \rightarrow \infty$,
\[\#\Vw = (\1_{2 \nmid q} + 2\cdot\1_{2\mid q,\,J \equiv w \pmod 2}) \frac{\phi(q)^J}q  \prod_{\substack{\ell|q\\ \ell>2}} \left(1 + O\left(\frac1{(\ell-1)^{J-1}}\right)\right),\]
leading to \eqref{VwVw*No}, since $\sum_{\ell\mid q,\,\ell>2} 1/(\ell-1)^{J-1} = o(1)$ as $J \rightarrow \infty$. 

Plugging \eqref{VmNo} and \eqref{VmstNo} into \eqref{NSWExpr} and \eqref{N^*SWExpr} respectively, and carrying out our initial reductions in reverse order completes the proof of \eqref{eq:convenientAAstar}, and hence also that of \eqref{eq:aastarUD}, for odd $q \le (\log x)^K$. On the other hand, when $q$ is even we obtain
\[ N(q, a) = \frac2q\sum_{\substack{n \le x\\A(n) \equiv a \pmod 2}} 1 +  o\left(\frac xq\right), \quad N^*(q, a) = \frac2q\sum_{\substack{n \le x\\A^*(n) \equiv a \pmod 2}} 1 + o\left(\frac xq\right);\]
here, it has been noted that $a-A(m) \equiv J \pmod 2$ is equivalent to $A(mP_1 \cdots P_J) \equiv a \pmod 2$, and likewise for $A^*$ in place of $A$. Since $A(n)$ is known to be equidistributed mod $2$ (as discussed in the remarks preceding the theorem), and $A^*(n) \equiv A(n) \pmod 2$, the theorem follows.
\end{proof}

The flexibility of our method suggests the possibility of extensions in several different directions. One natural generalization is to study simultaneous weak equidistribution for a finite family of polynomially-defined multiplicative functions. Problems of this kind with fixed moduli were investigated by Narkiewicz in \cite{narkiewicz82}, and initial results towards uniformity were obtained in \cite{PSR22}. It should now be possible to draw more complete conclusions. Going in a different direction, one could apply our method to additive functions, aiming perhaps at a uniform generalization of the quoted theorem of Delange. One could even consider simultaneous equidistribution of additive and multiplicative functions; here estimates for hybrid character sums, as in \cite{cochrane02}, should prove useful. 

We close on a more speculative note. The mixing exploited in this paper can be interpreted as a quantitative ergodicity phenomenon for random walks on multiplicative groups. However, our proofs go through character sum estimates; one might say that no actual Markov chains were harmed in the production of our arguments. It would be interesting to investigate the extent to which the (rather substantially developed) theory of Markov chain mixing could be brought directly to bear on these kinds of uniform and weak uniform distribution questions. This has the potential to open up applications in situations where character sum technology is unavailable.

\section*{Acknowledgements} We thank the referee for carefully reading the manuscript and for making helpful suggestions that have improved the results and the exposition. The first named author (P.P.) is supported by NSF award DMS-2001581. 

\providecommand{\bysame}{\leavevmode\hbox to3em{\hrulefill}\thinspace}
\providecommand{\MR}{\relax\ifhmode\unskip\space\fi MR }
% \MRhref is called by the amsart/book/proc definition of \MR.
\providecommand{\MRhref}[2]{%
  \href{http://www.ams.org/mathscinet-getitem?mr=#1}{#2}
}
\providecommand{\href}[2]{#2}


\begin{thebibliography}{10}

\bibitem{barban66}
M.B. Barban, \emph{The ``large sieve'' method and its application to number
  theory}, Uspehi Mat. Nauk \textbf{21} (1966), no.~1, 51--102 (Russian),
  English translation in Russ. Math. Surv. \textbf{21} (1966), no.~1, 49--103.

\bibitem{CG09}
J.~Cilleruelo and M.Z. Garaev, \emph{Least totients in arithmetic
  progressions}, Proc. Amer. Math. Soc. \textbf{137} (2009), 2913--2919.

\bibitem{cochrane02}
T.~Cochrane, \emph{Exponential sums modulo prime powers}, Acta Arith.
  \textbf{101} (2002), 131--149.

\bibitem{cochrane03}
T.~Cochrane, C.L. Liu, and Z.Y. Zheng, \emph{Upper bounds on character sums
  with rational function entries}, Acta Math. Sin. (Engl. Ser.) \textbf{19}
  (2003), 327--338.

\bibitem{delange69}
H.~Delange, \emph{On integral-valued additive functions}, J. Number Theory
  \textbf{1} (1969), 419--430.

\bibitem{FL08}
J.B. Friedlander and F.~Luca, \emph{Residue classes having tardy totients},
  Bull. Lond. Math. Soc. \textbf{40} (2008), 1007--1016.

\bibitem{FS07}
J.B. Friedlander and I.E. Shparlinski, \emph{Least totient in a residue class},
  Bull. Lond. Math. Soc. \textbf{39} (2007), 425--432, corrigendum in
  \textbf{40} (2008), 532.

\bibitem{garaev09}
M.Z. Garaev, \emph{A note on the least totient of a residue class}, Q. J. Math.
  \textbf{60} (2009), 53--56.

\bibitem{goldfeld17}
D.~Goldfeld, \emph{On an additive prime divisor function of {A}lladi and
  {{E}rd\H{o}s}}, Analytic number theory, modular forms and
  {$q$}-hypergeometric series, Springer Proc. Math. Stat., vol. 221, Springer,
  Cham, 2017, pp.~297--309.

\bibitem{HT88}
R.R. Hall and G.~Tenenbaum, \emph{Divisors}, Cambridge Tracts in Mathematics,
  vol.~90, Cambridge University Press, Cambridge, 1988.

\bibitem{konyagin79b}
S.~Konyagin, \emph{Letter to the editors: ``{T}he number of solutions of
  congruences of the {$n$}th degree with one unknown''}, Mat. Sb. (N.S.)
  \textbf{110(152)} (1979), 158.

\bibitem{konyagin79a}
\bysame, \emph{The number of solutions of congruences of the {$n$}th degree
  with one unknown}, Mat. Sb. (N.S.) \textbf{109(151)} (1979), 171--187, 327.

\bibitem{landau09}
E.~Landau, \emph{L\"{o}sung des {L}ehmer'schen {P}roblems}, American J. Math.
  \textbf{31} (1909), 86--102.

\bibitem{LPR21}
N.~Lebowitz-Lockard, P.~Pollack, and A.~Singha~Roy, \emph{Distribution mod $p$
  of {E}uler's totient and the sum of proper divisors}, Michigan Math. J., to
  appear.

\bibitem{LY94}
D.B. Leep and C.C. Yeomans, \emph{The number of points on a singular curve over
  a finite field}, Arch. Math. (Basel) \textbf{63} (1994), 420--426.

\bibitem{MV07}
H.L. Montgomery and R.C. Vaughan, \emph{Multiplicative number theory. {I}.
  {C}lassical theory}, Cambridge Studies in Advanced Mathematics, vol.~97,
  Cambridge University Press, Cambridge, 2007.

\bibitem{narkiewicz66}
W.~Narkiewicz, \emph{On distribution of values of multiplicative functions in
  residue classes}, Acta Arith. \textbf{12} (1967), 269--279.

\bibitem{narkiewicz82}
\bysame, \emph{On a kind of uniform distribution for systems of multiplicative
  functions}, Litovsk. Mat. Sb. \textbf{22} (1982), 127--137.

\bibitem{narkiewicz84}
\bysame, \emph{Uniform distribution of sequences of integers in residue
  classes}, Lecture Notes in Mathematics, vol. 1087, Springer-Verlag, Berlin,
  1984.

\bibitem{norton76}
{K.K.} Norton, \emph{On the number of restricted prime factors of an integer.
  {I}}, Illinois J. Math. \textbf{20} (1976), 681--705.

\bibitem{pillai40}
S.S. Pillai, \emph{Generalisation of a theorem of {Mangoldt}}, Proc. Indian
  Acad. Sci., Sect. A \textbf{11} (1940), 13--20.

\bibitem{PSR}
P.~Pollack and A.~Singha~Roy, \emph{Benford behavior and distribution in
  residue classes of large prime factors}, Canad. Math. Bull. (to appear).

\bibitem{PSR22}
\bysame, \emph{Joint distribution in residue classes of polynomial-like
  multiplicative functions}, Acta Arith. \textbf{202} (2022), 89--104.

\bibitem{pomerance77}
C.~Pomerance, \emph{On the distribution of amicable numbers}, J. Reine Angew.
  Math. \textbf{293(294)} (1977), 217--222.

\bibitem{SS94}
W.~Schwarz and J.~Spilker, \emph{Arithmetical functions}, London Mathematical
  Society Lecture Note Series, vol. 184, Cambridge University Press, Cambridge,
  1994, An introduction to elementary and analytic properties of arithmetic
  functions and to some of their almost-periodic properties.

\bibitem{scourfield84}
E.J. Scourfield, \emph{Uniform estimates for certain multiplicative
  properties}, Monatsh. Math. \textbf{97} (1984), 233--247.

\bibitem{tenenbaum15}
G.~Tenenbaum, \emph{Introduction to analytic and probabilistic number theory},
  third ed., Graduate Studies in Mathematics, vol. 163, American Mathematical
  Society, Providence, RI, 2015.

\end{thebibliography}
\end{document}